\documentclass{article}
\usepackage{amsmath,amsfonts,amssymb,amsthm,url}

\usepackage{graphicx,mathrsfs} 

\newtheorem{theorem}{Theorem}
\newtheorem{lemma}{Lemma}

\title{A Free Analog of Bobkov's Gaussian Isoperimetry Inequality}
\author{Dima Shlyakhtenko}
\date{\today}

\begin{document}
\maketitle

\begin{abstract}
    We prove a one-variable functional inequality which is the free probability analog of Bobkov's isoperimetry inequality. The inequality involves the $L^1$ norm of the difference quotient of a function $f$ and can be viewed as a non-local isoperimetric inequality. We also prove related inequalities for subsets of an interval as well as for subsets of roots of Hermite polynomials. This paper is also an experiment in AI-based exploration of free analogs of classical probability statements.
\end{abstract}

\section{Introduction.} \label{sec:notation}  Bobkov's inequality \cite{Bobkov1997} is a generalization of the Gaussian isoperimetric inequality. One formulation of the inequality is the following. Let $f:\mathbb{R}^2\to[0,1]$ be a sufficiently smooth function (e.g., $C^1$).  Let $\phi$ denote the Gaussian density $\phi(x)=(2\pi)^{-1/2} \exp(-x^2/2)$, and let $\psi$ be the inverse of the Gaussian quantile map determined by $$
\int_{-\infty}^{\psi(u)} \phi(t)dt = u,
$$  and consider the ``profile function'' $$I(u) = \phi(\psi(u)).$$  Let $\gamma_d (dx) = (2\pi)^{-d/2} \exp(-\Vert x\Vert^2/2)$ be the Gaussian measure on $\mathbb{R}^d$.  Then Bobkov's inequality states that
\begin{equation}\label{eq:strongBobkov}
I\left(\int fd\gamma\right)  \leq \int \sqrt{ (I\circ f)^2 + \Vert \nabla f\Vert^2} d\gamma.
\end{equation}
or, in an equilvaent form (see \cite{BartheMaurey}),
\begin{equation}\label{eq:weakBobkov}
I\left(\int fd\gamma\right) - \int I\circ f d\gamma \leq \int \Vert \nabla f\Vert d\gamma.
\end{equation}

The key feature of Bobkov's inequality is the right-hand side of \eqref{eq:weakBobkov}, which involves the $L^1$ norm of the gradient of $f$. 

Bobkov's inequality has received much attention. For exmaple, Bakry and Ledoux proved \eqref{eq:strongBobkov} using the Ornstein-Uhlenbeck semigroup, and generalized the inequality to log-concave measures \cite{BakryLedoux1996}.  
Other proofs and generalizations, using Brownian motion theory, were given by Capitaine, Hsu, and Ledoux in \cite{CapitaineHsuLedoux} and  Barthe and Maurey \cite{BartheMaurey}.

In this paper we prove an analog of Bobkov's inequality \eqref{eq:weakBobkov} in free probability theory \cite{VoiculescuDykemaNica,NicaSpeicher} in the one-variable case  $d=1$. More precisely:
\begin{theorem} \label{thrm:FBI}
    Let $f \in C^1([-1,1],\sigma)$  and assume that $0\leq f\leq 1$.  Then \begin{equation}
    \label{eq:FBI}
        J\left(\int f d\sigma \right) - \int (J\circ f) d\sigma (x) \leq \int_{-1}^1 \int_{-1}^1 \left| \frac{f(x)-f(y)}{x-y}\right| d\sigma(x) d\sigma(y).
    \end{equation}
\end{theorem}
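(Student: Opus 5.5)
The plan is to follow the semigroup proof of Bakry and Ledoux \cite{BakryLedoux1996}, with two replacements. The Ornstein--Uhlenbeck semigroup becomes the free Ornstein--Uhlenbeck semigroup $(P_t)_{t\ge0}$ on $L^2([-1,1],\sigma)$. The gradient becomes the free difference quotient $\partial f(x,y)=\frac{f(x)-f(y)}{x-y}$. I take $\sigma$ to be the semicircle law on $[-1,1]$ and $J$ the profile function defined earlier in the paper. Writing $\mathcal N(f)=\int\!\!\int |\partial f|\,d\sigma\,d\sigma$ for the right-hand side of \eqref{eq:FBI}, I consider
\[
\Phi(t)=\int J(P_t f)\,d\sigma .
\]
Since $P_tf\to\int f\,d\sigma$ as $t\to\infty$, we have $\Phi(\infty)-\Phi(0)=J(\int f\,d\sigma)-\int J\circ f\,d\sigma$. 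It therefore suffices to prove
\[
\int_0^\infty \Phi'(t)\,dt\le \mathcal N(f).
\]

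The first step is the free integration by parts. The generator $L$ of $P_t$ satisfies $\int g\,Lh\,d\sigma=-c\int\!\!\int \partial g\,\partial h\,d\sigma\,d\sigma$ for a normalizing constant $c$ depending on the scaling of $\sigma$. Applying this with $g=J'(P_tf)$ and $h=P_tf$ gives
\[
\Phi'(t)=-c\int\!\!\int \frac{J'(P_tf(x))-J'(P_tf(y))}{P_tf(x)-P_tf(y)}\,|\partial P_tf(x,y)|^2\,d\sigma\,d\sigma .
\]
The second step is the free commutation relation. Since $P_t$ acts on Chebyshev polynomials of the second kind by multiplication by $e^{-kt}$, one checks that $\partial P_t=e^{-t}(P_t\otimes P_t)\partial$. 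Together with positivity of $P_t\otimes P_t$, this gives
\[
\mathcal N(P_tf)\le e^{-t}\mathcal N(f),
\]
which is the analog of $|\nabla P_tf|\le e^{-t}P_t|\nabla f|$.

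The third step combines these two facts. The goal is to bound $\Phi'(t)$ by $-\tfrac{d}{dt}$ of a quantity controlled by $\mathcal N(P_tf)$, or directly by $C e^{-t}\mathcal N(f)$ with $\int_0^\infty Ce^{-t}\,dt=1$. Here the defining property of $J$ must be used. Classically it is $II''=-1$. In the free setting I expect it to be a nonlocal two-point inequality: for $a,b\in[0,1]$,
\[
-\frac{J'(a)-J'(b)}{a-b}\,|a-b|\le \text{(const)},
\]
or a variant weighted by the kernel of $\sigma$. I would first try to derive this two-point inequality from the explicit formula for $J$.

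The main obstacle is that the free chain rule is nonlocal. The difference $J(f(x))-J(f(y))$ is only comparable to $f(x)-f(y)$ through a divided difference of $J'$, so the pointwise identity $\nabla J(f)=J'(f)\nabla f$ is unavailable. One has to show that the divided differences of $J'$, integrated against $|\partial P_tf|^2$, are dominated by $|\partial P_tf|$ itself. This is what makes the $L^1$ norm of $\partial f$, rather than the $L^2$ norm, appear on the right. If this direct semigroup estimate fails, the fallback is a discretization. I would prove the analogous inequality on the roots of the Hermite polynomial $H_n$ with uniform weights, where the divided-difference structure is exact and the statement reduces to finitely many two-point-type inequalities. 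I would then pass to the limit $n\to\infty$, using that the rescaled empirical measure of the roots converges to $\sigma$ and that $C^1$ functions have uniformly convergent divided differences.
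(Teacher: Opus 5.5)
Your first two steps are sound: the integration by parts, and $\mathcal N(P_tf)\le e^{-t}\mathcal N(f)$ from $\partial P_t=e^{-t}(P_t\otimes P_t)\partial$ together with positivity of the free Mehler kernel. The gap is the third step. It is the only place where the specific $J$ enters and where the sharp constant must come from, and you leave it as an expectation. The estimate you propose for it cannot close the argument. Since $J'=-4\alpha$, your two-point quantity equals $4|\alpha(a)-\alpha(b)|\le 8$, which is trivially bounded. But your formula for $\Phi'$ reads
\[
\Phi'(t)=c\iint \frac{4\,|\alpha(P_tf(x))-\alpha(P_tf(y))|}{|x-y|}\,|\partial P_tf(x,y)|\,d\sigma(x)\,d\sigma(y),
\]
so a bound on $|\alpha(a)-\alpha(b)|$ alone leaves a factor $|x-y|^{-1}$ and does not control $\Phi'(t)$ by $\mathcal N(P_tf)$.

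What you would actually need is a uniform Lipschitz estimate $|\alpha(P_tg(x))-\alpha(P_tg(y))|\le C(t)|x-y|$ over all $0\le g\le 1$, with $4c\int_0^\infty C(t)e^{-t}\,dt\le 1$. This is the free counterpart of $|\nabla P_tg|\le \frac{e^{-t}}{\sqrt{1-e^{-2t}}}I(P_tg)$. Nothing in the proposal supplies it, and such an estimate cannot give the sharp result anyway. Your chain applies verbatim to $f=1_{[-1,a]}$, where \eqref{eq:FBI} is an equality. So every inequality in the chain would have to be an equality for a.e.\ $t$, which forces $\alpha\circ P_t1_{[-1,a]}$ to be affine in $x$. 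In the Gaussian case $\psi\circ P_t1_H$ is affine by Mehler's formula, but here it is not. Take $a=0$:
\begin{itemize}
\item Symmetry would force $\alpha\circ P_t1_{[-1,0]}=-\kappa(t)x$ with $\kappa(t)\to 0$.
\item The ratio of the $U_3$-coefficient to the cube of the $U_1$-coefficient is preserved by $P_t$, and equals $-9\pi^2/40$ for $1_{[-1,0]}$.
\item For $F(-\kappa x)$, with $F$ the distribution function of $\sigma$, that ratio tends to $-\pi^2/24$ as $\kappa\to0$.
\end{itemize}
For the monotone-functional variant you give no candidate. The Hermite fallback is also not an argument: the discrete problem is still a nonlocal rearrangement problem and does not reduce to two-point inequalities.

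The paper uses no semigroup. Set $A_t=\{f>t\}$. The identity $|f(x)-f(y)|=\int_0^1|1_{A_t}(x)-1_{A_t}(y)|\,dt$ gives $\mathcal N(f)=\int_0^1 2E(A_t)\,dt$. Since $J$ is concave with $-1/J''$ concave, the map $f\mapsto J(\int f\,d\sigma)-\int J\circ f\,d\sigma$ is the $\varphi$-entropy for $\varphi=-J$, hence convex in $f$. Writing $f=\int_0^1 1_{A_t}\,dt$ and using $J(0)=J(1)=0$ then gives $J(\int f\,d\sigma)-\int J\circ f\,d\sigma\le \int_0^1 J(\sigma(A_t))\,dt$. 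This reduces \eqref{eq:FBI} to the set inequality $J(\sigma(X))\le 2E(X)$ of Theorem~\ref{thrm:optimalSet}. That inequality is proved by rearrangement in quantile coordinates. The Hilbert-transform identity gives $E_c(A)=-2\int_A\alpha_c+2B_c(A)$. The monotonicity of $\alpha_{c'}-\alpha_c$ (Lemma~\ref{lem:dIsMonotone}) lets one shrink the ambient interval without increasing $D_c(A)$. Combined with complementation, this gives an induction on the number of component intervals. The reduction to level sets and this rearrangement argument are the key ideas missing from your proposal.
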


Here $\sigma$ is the semicircle law $\sigma(dx)=\frac{2}{\pi} 1_{[-1,1]}(x) \sqrt{1-x^2} dx$, and $J$ is the free profile map given by $$
J(x) = \frac{8}{3\pi}(1-\alpha^2(x))^{3/2}
$$
where $\alpha(x)$ is the inverse quantile map of the semicircle law determined by 
$$
\int_{-1}^{\alpha(x)} d\sigma(t) = x.
$$

Curiously, $J$ has a structure similar to $I$, except that it is (up to a constant) the composition of  the \emph{cube} of the semicircle density with the inverse quantile map.  On the other hand, the right-hand side of \eqref{eq:FBI} involves the $L^1$ norm of the difference quotient derivation, which is the free analog of the gradient (see e.g. \cite{Voiculescu1998}).  

By continuity and semicontinuity, Theorem~\ref{thrm:FBI} holds true for any bounded measurable function $f:[-1,1]\to[0,1]$.  If we consider the special case of $f=1_{X}$ where $X\subset [-1,1]$ is a measurable subset, \eqref{eq:FBI} simplifies. Indeed, $J(0)=J(1)=0$ and so $J\circ f=0$ in this case.  If we introduce the notation $$E(X)=\int_{x\in X} \int_{y\in [-1,1]\setminus X}  \frac{1}{|x-y|} d\sigma(x) d\sigma(y),$$ Theorem~\ref{thrm:FBI} immediately gives a Lieb-style non-local isoperimetry inequality:

\begin{theorem} \label{thrm:optimalSet}
   Let $X\subset[-1,1]$ be a Lebesgue-measurable set.  Then \begin{equation}\label{eq:FBIS}
    2 E(X) \geq J(\sigma(X)) = 2E([-1,\alpha(\sigma(X))]).
   \end{equation} 
\end{theorem}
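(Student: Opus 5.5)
The inequality $2E(X)\ge J(\sigma(X))$ should follow from Theorem~\ref{thrm:FBI} applied to $f=1_X$, after an approximation step. The equality $J(\sigma(X)) = 2E([-1,\alpha(\sigma(X))])$ is a separate explicit computation.

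\emph{Step 1: the inequality for indicators.} For $f=1_X$, the double integral on the right of \eqref{eq:FBI} is
\[
\int\!\!\int \frac{|1_X(x)-1_X(y)|}{|x-y|}\,d\sigma(x)\,d\sigma(y).
\]
The integrand is nonzero only when exactly one of $x,y$ lies in $X$. By symmetry in $(x,y)$ the integral therefore equals $2E(X)$. On the left, $J(0)=J(1)=0$ because $\alpha(0)=-1$ and $\alpha(1)=1$, so $J\circ 1_X\equiv 0$. The left side thus reduces to $J(\sigma(X))$.

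Since $1_X$ is not $C^1$, we must justify applying Theorem~\ref{thrm:FBI}. It suffices to treat the case $E(X)<\infty$. Choose $C^1$ functions $f_n$ with values in $[0,1]$ and $f_n\to 1_X$ in a controlled way. Concretely, I would first take $X$ to be a finite union of intervals, approximate it by smoothed indicators, and then pass to general measurable $X$.

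\begin{itemize}
\item On the left side, $\int f_n\,d\sigma\to\sigma(X)$ and $J$ is continuous on $[0,1]$. Also $\int J\circ f_n\,d\sigma\to 0$ by dominated convergence, since $J$ is bounded and $J\circ f_n\to 0$ almost everywhere.
\item On the right side we need $\limsup$ of the difference-quotient integrals to be at most $2E(X)$. This is the delicate point, because the kernel $|x-y|^{-1}$ is singular. For a finite union of intervals with mollification at scale $\epsilon$, the contribution near each endpoint is $O(1)$ times a logarithmic quantity that is already present in $E(X)$. I would check convergence directly there.
\item For general $X$ with $E(X)<\infty$, this quantity is a fractional-Sobolev-type seminorm with $s\to 0$ kernel. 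I would approximate $X$ by finite unions of intervals $X_k$ with $\sigma(X_k\triangle X)\to 0$ and $E(X_k)\to E(X)$. If that is awkward, I would instead use convolution $f_n=1_X*\rho_n$ and Jensen/convexity of $|\cdot|$. Translation invariance of Lebesgue measure gives $\|f_n\|\le$ the corresponding Lebesgue-weighted seminorm, but the weights $\sqrt{1-x^2}$ break exact invariance near $\pm1$. This approximation step is the main obstacle.
\end{itemize}

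\emph{Step 2: the equality $J(u)=2E([-1,\alpha(u)])$.} Set $a=\alpha(u)$. Then
\[
2E([-1,a])=2\int_{-1}^{a}\int_{a}^{1}\frac{d\sigma(y)\,d\sigma(x)}{y-x}.
\]
Both sides vanish at $u=0$, so it suffices to compare derivatives in $a$. Differentiating gives
\[
2\,\frac{2}{\pi}\sqrt{1-a^2}\left(\int_a^1\frac{d\sigma(y)}{y-a}-\int_{-1}^a\frac{d\sigma(x)}{a-x}\right)
=2\,\frac{2}{\pi}\sqrt{1-a^2}\;\mathrm{p.v.}\!\int\frac{d\sigma(y)}{y-a}.
\]
This combination is finite as a principal value. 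The Hilbert transform of the semicircle on $[-1,1]$ is explicit: $\mathrm{p.v.}\int\frac{d\sigma(y)}{a-y}=2a$. The derivative is therefore $-\frac{8}{\pi}a\sqrt{1-a^2}$.

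On the other side,
\[
\frac{d}{da}\,\frac{8}{3\pi}(1-a^2)^{3/2}=-\frac{8}{\pi}a\sqrt{1-a^2}.
\]
The two derivatives agree, and both expressions vanish at $a=-1$, so $J(u)=2E([-1,\alpha(u)])$.
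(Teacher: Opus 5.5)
Your Step~1 is circular. In the paper, Theorem~\ref{thrm:FBI} is not proved independently. It is \emph{derived from} Theorem~\ref{thrm:optimalSet}: one writes $f=\int_0^1 1_{A_t}\,dt$ with $A_t=\{f>t\}$, so that $\mathscr{E}(f)=\int_0^1 2E(A_t)\,dt$. The layer-cake bound $J(\int f\,d\sigma)-\int J\circ f\,d\sigma\le\int_0^1 J(\sigma(A_t))\,dt$ is then combined with the set inequality $J(\sigma(A_t))\le 2E(A_t)$. The introduction's remark that Theorem~\ref{thrm:FBI} ``immediately gives'' Theorem~\ref{thrm:optimalSet} only records that the two statements are equivalent; the paper says explicitly that it proves the set version first. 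So your proposal has no argument for $2E(X)\ge J(\sigma(X))$, which is the entire content of the theorem. There is also a secondary gap. Even granting Theorem~\ref{thrm:FBI} for $C^1$ functions, passing to $f=1_X$ needs approximants with $\limsup_n\mathscr{E}(f_n)\le 2E(X)$. Lower semicontinuity of $\mathscr{E}$ gives the opposite inequality, and you leave this step open.

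The missing idea is a direct proof for sets. The paper passes to quantile coordinates $A=\alpha^{-1}(X)\subset[0,1]$, so that $E(X)=E_1(A)$. It then embeds the problem in the family of rescaled semicircles of mass $c$, with quantile maps $\alpha_c(u)=\sqrt{c}\,\alpha(u/c)$. The Hilbert-transform identity $\operatorname{P.V.}\int_0^c\frac{dv}{\alpha_c(u)-\alpha_c(v)}=2\alpha_c(u)$ gives $E_c(A)=-2\int_A\alpha_c(u)\,du+2B_c(A)$. Here $B_c(A)$ counts only pairs $u\in A$, $v\notin A$ with $v<u$, so $B_c([0,a])=0$. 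The key lemma (Lemma~\ref{lem:dIsMonotone}, which reduces to $\theta\le\tan\theta$) says $\alpha_{c'}-\alpha_c$ is increasing in $u$ for $c'<c$. This gives both $\int_A(\alpha_{c'}-\alpha_c)\ge\int_0^{|A|}(\alpha_{c'}-\alpha_c)$ and $B_c(A)\ge B_{c'}(A)$. Hence $D_c(A)=E_c(A)-E_c([0,|A|])\ge D_{c'}(A)$ whenever $A\subset[0,c']$. Add the reflection symmetry $D_c(A)=D_c([0,c]\setminus A)$. The paper then cuts $c$ down to the right endpoint of the last component of $A$, complements, and cuts again. Inducting on the number of intervals reaches $D=0$, and general measurable $X$ follows by approximation. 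Your Step~2, the identity $J(u)=2E([-1,\alpha(u)])$ via the principal-value derivative and the semicircle Hilbert transform, is correct. It is the same computation as in Lemma~\ref{lem:FBIIntervals}.
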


In fact, Theorem~\ref{thrm:FBI} is equivalent to Theorem~\ref{thrm:optimalSet}, and we prove the latter theorem first. 

Note that the equality $J(\sigma(X)) = 2E([-1,\alpha(\sigma(X))])$ in Theorem~\ref{thrm:optimalSet} shows that the inequality  \eqref{eq:FBI} is tight, with equality achieved when $f$ is a characteristic function of a set of the form $[-1,a]$ or $[a,1]$. Thus $J$ is the optimal profile for our problem.

It is natural to ask whether multi-variable or operator-valued generalizations of the inequality exist, but we do not have any results in this direction.

It is also natural to ask whether the inequality \eqref{eq:FBIS} arises in some large-$n$ limit. We show that this is indeed the case: in fact, in the style of finite free probability \cite{MarcusSpeilmanSrivastava}, a similar inequality is true for roots of Hermite polynomials. More precisely, denoting by $R$ the set of roots of the $n$-th Hermite polynomial, we can define for any $A\subset R$ its energy $$E(A)=\sum_{x\in A}\sum_{y\in R\setminus A} \frac{1}{|x-y|}.$$  The statement is then that among all sets $A$ of fixed cardinality, $E(A)$ is minimized by an interval consisting of the first $|A|$ roots (see Theorem~\ref{thrm:HermiteIntervalOptimal} for the precise statement). 

The idea of the proof comes from exploring a monotonicity property of the quantile maps associated with rescaled semicircular measures (these are no longer probability measures), given by Lemma~\ref{lem:dIsMonotone}, or the analogous statement for the Hermite roots, see Lemma~\ref{lem:hermiteFacts}.\ref{fact:diMonotone}.  This monotonicity allows us to compare the energy of a root configuration of a polynomial $H_n$ with a similar configuration of roots, but of the polynomial $H_{n-1}$ (with no rescaling) and then proceed by induction until encountering a configuration that consists of all of the roots of $H_k$ for some $k<n$. 

This paper arose out of the author's curiosity as to whether it would be possible to set up an AI tool that would be able to automatically explore the landscape of free probability by formulating and subsequently proving or disproving free analogs of classical probability statements, and checking the proofs through formalization in a proof assistant. 

A tool like that remains a (distant?) dream. Nonetheless, this paper represents a full run through such a cycle, although with a lot of human assistance. ChatGPT 5.4 Pro and 5.5 Pro were essential tools in obtaining this result, by filling in proofs, providing numerical verification for conjecture --- or refuting them either numerically by through reasoning. In fact, the inequality of the paper was  (after some false starts) conjectured by ChatGPT.  However, many of the high-level ideas of the proof, including the use of Hermite roots and comparison between roots of an Hermite polynomial and its derivative (which led us to find the proof), were human-supplied, but elaborated by AI.  

Thus this paper represents an truly AI-assisted mathematical result --- many ideas were supplied and elaborated by the human author --- but it would not have been written without AI's involvement.

A complete formalization of this paper in Lean using Mathlib 4.30.0 was constructed using the Codex coding assistant supplied with a detailed proof outline written by ChatGPT. Although human assistance was required, it was minimal. The formalization is available on GitHub \cite{githublink}\footnote{The formalization differs from the current paper in that it derives Theorem~\ref{thrm:FBI} from Theorem~\ref{thrm:HermiteIntervalOptimal} and has a formally weaker form of Theorem~\ref{thrm:optimalSet}: it is stated for sets which can be well-approximated by Hermite roots. The statement is enough to derive Theorem~\ref{thrm:FBI}, which could in principle be used to bootstrap the full Theorem~\ref{thrm:optimalSet}, but we chose to forego the exercise. The formalization uses no axioms beyond the standard Lean axioms.}.

The formalization of Theorem~\ref{thrm:HermiteIntervalOptimal} was obtained in about a day after the proof was obtained, i.e., the time required was of the same order as needed for human verification (the full formalization took approximately a week, due to difficulties in handling the passage to the limit of Hermite roots). This demonstrated the emerging capability of AI tools to rapidly produce formally verified proofs of novel mathematical results.   The resulting AI-produced formalization serves its purpose, but the resulting codebase is not optimized for either length, clarity, readability, or reusability, and has many shortcomings.

\subsection{Acknowledgments}  The author is grateful to Djalil Chafai, Wilfrid Gangbo, John Hopper, and Terry Tao for a number of fruitful discussions. This paper would not have been possible without ChatGPT.  I am also grateful to Robert Shlyakhtenko for his advice on the formalization of this paper.

This research was sponsored in part by the Army Research Office and was accomplished under Grant
Number W911NF-25-1-0075. The views and conclusions contained in this document are those of the authors and
should not be interpreted as representing the official policies, either expressed or implied, of the Army Research
Office or the U.S. Government. The U.S. Government is authorized to reproduce and distribute reprints for
Government purposes notwithstanding any copyright notation herein.
Research was also supported in part by NSF grant DMS-2348633.

\section{Proof of Theorem~\ref{thrm:optimalSet}.}\label{sec:proofoFtheoremOptimalSet}
We now give a proof of Theorem~\ref{thrm:optimalSet}, which is essentially restated (with assumptions on the set $X$) as  Lemma~\ref{lem:FBIIntervals}. We could have derived that Lemma as an $n\to\infty$ limit of Theorem~\ref{thrm:HermiteIntervalOptimal}, but we prefer to give a direct argument that actually parallels its proof.

Let $$G(z) = \int \frac{1}{z-x} d\sigma(x) = 2 ( z - \sqrt{z^2 -1})$$ be the Cauchy transform of the semicircle law. Recall also that the Hilbert transform of the semicircle density $\rho(x) = (2/\pi) \sqrt{1-x^2}$ satisfies
$$
\operatorname{P.V.} \int \frac{d\sigma(y)}{x-y} = 2x.
$$
Equivalently, substituting $y=\alpha(u)$, $$
\operatorname{P.V.} \int \frac{1}{\alpha(u)-\alpha(v)}dv = 2 \alpha(u).$$

Fix $c<1$. Denote by $D_{\sqrt{c}} : [-1,1] \ni 
\lambda\mapsto \sqrt{c}\lambda 
\in [-\sqrt{c},\sqrt{c}]$ 
the homothety by $\sqrt{c}$.  Let $$\sigma_c = c D_{\sqrt{c}} \sigma.$$  
Then $\sigma_c$ is a finite measure of total mass $c$ with support $[-\sqrt{c},\sqrt{c}]$ and density $\rho_c(x) = (2/\pi) \sqrt{c-x^2}$.  The quantile map $\alpha_c : [0,c]\to[-\sqrt{c},\sqrt{c}]$ determined by $$\int_{-\sqrt{c}}^{\alpha_c(u)} d\sigma_c(t) = u,$$ is given by: $$
\alpha_c(u) = \sqrt{c}\alpha(u/c),$$ and the Cauchy transform of $\sigma_c$ is $G_c(z) = \sqrt{c} G(z/\sqrt{c})$. Moreover, 
\begin{equation} \label{eq:StieltjesForSigma_c}\operatorname{P.V.} \int_{0}^c \frac{dv}{\alpha_c(u)-\alpha_c(v)} =2\alpha_c(u).\end{equation}

\begin{lemma} \label{lem:dIsMonotone}
    For any $0<u<c'<c\leq 1$, the map $u\mapsto  \alpha_{c'}(u) - \alpha_c(u) $ is monotone increasing.
\end{lemma}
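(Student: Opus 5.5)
The plan is to reduce monotonicity of $u\mapsto \alpha_{c'}(u)-\alpha_c(u)$ to a pointwise comparison of derivatives. On $(0,c')$ both quantile maps are smooth, and by the inverse function theorem
$$
\alpha_c'(u)=\frac{1}{\rho_c(\alpha_c(u))}=\frac{\pi}{2\sqrt{c-\alpha_c(u)^2}} .
$$
It therefore suffices to show that for fixed $u$ the quantity
$$
h(c)=c-\alpha_c(u)^2,\qquad c\in(u,1],
$$
is increasing in $c$. Then $c'<c$ gives $\rho_{c'}(\alpha_{c'}(u))\le \rho_c(\alpha_c(u))$, hence $\alpha_{c'}'(u)\ge\alpha_c'(u)$. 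Equivalently, the density of $\sigma_c$ evaluated at its own $u$-quantile grows with the total mass $c$. Monotonicity on the open interval then extends to the endpoints by continuity.

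To compute $\partial_c\alpha_c(u)$, I would differentiate the defining relation $\int_{-\sqrt c}^{\alpha_c(u)}\frac{2}{\pi}\sqrt{c-t^2}\,dt=u$ in $c$. The lower-endpoint term vanishes because the density is zero at $-\sqrt c$. This gives
$$
\rho_c(\alpha_c(u))\,\partial_c\alpha_c(u)+\frac1\pi\int_{-\sqrt c}^{\alpha_c(u)}\frac{dt}{\sqrt{c-t^2}}=0 .
$$
Next parametrize $\alpha_c(u)=-\sqrt c\cos\theta$ with $\theta\in(0,\pi)$; this is legitimate since $0<u<c$ puts the quantile strictly inside the support. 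The integral equals $\arcsin(-\cos\theta)+\pi/2=\theta$, and $\rho_c(\alpha_c(u))=\frac{2}{\pi}\sqrt c\sin\theta$. Hence
$$
\partial_c\alpha_c(u)=-\frac{\theta}{2\sqrt c\,\sin\theta}.
$$
Alternatively one could use the scaling $\alpha_c(u)=\sqrt c\,\alpha(u/c)$ directly, but the implicit differentiation seems cleaner.

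Plugging this in gives
$$
h'(c)=1-2\alpha_c(u)\,\partial_c\alpha_c(u)=1-\theta\cot\theta .
$$
The remaining elementary fact is that $\theta\cot\theta<1$ on $(0,\pi)$. On $[\pi/2,\pi)$ this holds because $\cot\theta\le0$. On $(0,\pi/2)$ it is equivalent to $\theta<\tan\theta$. Thus $h'>0$, so $h$ is increasing, which proves the lemma.

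The main obstacle is just getting the signs right in the implicit differentiation, in particular justifying that the boundary term at $-\sqrt c$ drops out and evaluating the arcsine integral. Once $h'(c)=1-\theta\cot\theta$ is obtained, the rest is elementary.
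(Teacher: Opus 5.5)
Your argument is correct and is essentially the paper's proof. Both reduce the claim to $\alpha_{c'}'(u)\ge\alpha_c'(u)$, i.e.\ to $c\mapsto c-\alpha_c(u)^2$ being increasing, which the paper phrases via $w=u/c$ as $w\mapsto(1-\alpha(w)^2)/w$ being decreasing. Both then arrive at $\theta\cos\theta\le\sin\theta$ with the same $\theta$ and the same case split at $\theta=\pi/2$. The only difference is that you differentiate in $c$ implicitly rather than in $x=\alpha(w)$, and the boundary term you flag is harmless: after substituting $t=\sqrt c\,s$ the lower limit is fixed.
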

\begin{proof}
Note that $$\alpha_{c'}(u)-\alpha_c(u) = \sqrt{c'}\alpha(u/c') - \sqrt{c}\alpha(u/c) = \sqrt{c}\left(\sqrt{r}\alpha\left(w/r\right)  -\alpha(w)\right),$$ where we put $w=u/c$ and $r=c'/c$. Thus the statement of the Lemma is equivalent to the statement that for any $0<r<1$,  $$f(w) = 
\sqrt{r} \alpha(w/r) - \alpha(w)$$ is monotone increasing in $w$.
Differentiating $$\int_{-1}^{\alpha(u)} d\sigma(t) = u,$$ and letting $\rho(t)=(2/\pi)\sqrt{1-t^2}$, we get  $$
\alpha'(u) = \frac{1}{\rho(\alpha(u))}.$$ Hence
$$\frac{\pi}{2} f'(w) =\frac{1}{\sqrt{r}\sqrt{1-\alpha^2(w/r)}} - \frac{1}{\sqrt{1-\alpha^2(w)}}.$$ Therefore, it's sufficient to prove that $f'\geq 0$, i.e., $$
\sqrt{1-\alpha^2(w)} \geq \sqrt{r}\sqrt{1-\alpha^2(w/r)}.
$$
Let's divide both sides by $\sqrt{w}>0$ to get the equivalent inequality
$$
\frac{\sqrt{1-\alpha^2(w)}}{\sqrt{w}} \geq 
\frac{\sqrt{1-\alpha^2(w/r)}}{\sqrt{w/r}} = \frac{\sqrt{1-\alpha^2(w')}}{\sqrt{w'}},
$$ where we put $w'=w/r>w$.

Thus it's sufficient to prove that the function $$w\mapsto\frac{\sqrt{1-\alpha^2(w)}}{\sqrt{w}},$$ or, equivalently,
$$w\mapsto \frac{1-\alpha^2(w)}{w},$$ is monotone decreasing for $0<w<1$. 

Let's substitute $w=\alpha^{-1}(x)$.  Since $\alpha$ (and thus $\alpha^{-1}$) is monotone increasing, it's therefore sufficient to prove that 
$$g: x\mapsto \frac{1-x^2}{\alpha^{-1}(x)}$$ is monotone decreasing. We now compute the derivative of $g$, using $(\alpha^{-1})'(x)=\rho(x) = (2/\pi) \sqrt{1-x^2}$:
$$
g'(x) = \frac{1}{(\alpha^{-1}(x))^2 }\left(
-2x \alpha^{-1}(x) -  \frac{2}{\pi} (1-x^2)^{3/2}
\right).
$$
Thus it's enough to show that $g'(x)\leq 0$, i.e., $$
-2x\alpha^{-1}(x) \leq \frac{2}{\pi} (1-x^2)^{3/2}.$$ Since $\alpha^{-1}(x)\geq 0$, the inequality is clearly true if $x\geq  0$.  If $x\leq 0$, let $y=-x$, so that we need to show
$$
2y\alpha^{-1}(-y) \leq  \frac{2}{\pi} (1-y^2)^{3/2}.
$$
By definition, $$\alpha^{-1}(x)=\int_{-1}^{x} \frac{2}{\pi}\sqrt{1-t^2} dt = \frac{1}{\pi} \left(\arccos(-x) + x\sqrt{1-x^2}\right), $$ 
so $$2y\alpha^{-1}(-y) = \frac{2y}{\pi} \left( \arccos(y) - y\sqrt{1-y^2}\right),$$ and we need to show that
$${y} \left( \arccos(y) - y\sqrt{1-y^2}\right) 
 - (1-y^2)^{3/2}\leq 0
$$  Let's substitute $y=\cos\theta$; since $y\in [0,1)$, $0<\theta\leq \pi/2$.  The inequality becomes
$$
\cos\theta (\theta - \cos\theta \sin\theta ) - \sin^{3}\theta\leq 0.
$$
Simplifying, we get
\begin{align*}
\cos\theta &(\theta - \cos\theta \sin\theta ) - \sin^{3}\theta 
 = \theta \cos\theta -\cos^2\theta \sin\theta -\sin^3\theta \\
& = \theta \cos\theta -\sin\theta(\cos^2\theta +\sin^2\theta) 
= \theta\cos\theta -\sin\theta \\ & = \cos\theta (\theta -\tan\theta).
\end{align*}
Thus it's sufficient to prove that $h(\theta) = \theta - \tan\theta$ is non-positive for $0\leq \theta <\pi/2$. To this end, note that $h(0)=0$ and $h'(\theta)=1-\sec^2\theta<0$.
\end{proof}

For $A\subset[0,c]$ define 
$$E_c (A) = \iint_{\substack{u\in A\\v\in [0,c]\setminus A}} \frac{du dv}{|\alpha_c(u)-\alpha_c(v)|}, \quad B_c(A) = \iint_{\substack{u\in A,\\v\in [0,c]\setminus A,\\ v<u}} \frac{du dv}{\alpha_c(u)-\alpha_c(v)}.$$

\begin{lemma} \label{lem:EnergyInTermsOfInversion}
    With the above notation, we have:\\
    (i) $\displaystyle E_c(A) = -2 \int_A \alpha_c(u) du + 2 B_c (A).$\\
    (ii) $\displaystyle E_c([0,|A|]) = -2 \int_0^{|A|}\alpha_c(u) du$, where $|A|$ is the Lebesgue measure of $A$.
\end{lemma}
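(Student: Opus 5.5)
The plan is to split $E_c(A)$ according to whether $v<u$ or $v>u$, and then use the principal value identity \eqref{eq:StieltjesForSigma_c} to rewrite the $v>u$ part in terms of the $v<u$ part. Since $\alpha_c$ is increasing, on $\{v<u\}$ we have $|\alpha_c(u)-\alpha_c(v)|=\alpha_c(u)-\alpha_c(v)$. On $\{v>u\}$ we have $|\alpha_c(u)-\alpha_c(v)|=\alpha_c(v)-\alpha_c(u)$. This gives
$$E_c(A)=B_c(A)+\iint_{\substack{u\in A,\ v\in A^c\\ v>u}}\frac{du\,dv}{\alpha_c(v)-\alpha_c(u)},$$
where $A^c=[0,c]\setminus A$.

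For the second term, fix $u\in A$ and apply \eqref{eq:StieltjesForSigma_c} in the form
$$\operatorname{P.V.}\int_0^c \frac{dv}{\alpha_c(u)-\alpha_c(v)}=2\alpha_c(u).$$
Split the integral into the four regions $v<u$ or $v>u$, crossed with $v\in A$ or $v\in A^c$. The $v>u$ pieces have negative integrand, so this reads
$$\int_{v>u,\,v\in A^c}\frac{dv}{\alpha_c(v)-\alpha_c(u)} = -2\alpha_c(u)+\int_{v<u,\,v\in A^c}\frac{dv}{\alpha_c(u)-\alpha_c(v)} + \Big(\text{the same two signed terms with } v\in A\Big),$$
where all the integrals over $v\in A$ are taken together in the principal value sense.

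Now integrate over $u\in A$. The $A^c$ term with $v<u$ integrates to exactly $B_c(A)$. The combined $A\times A$ contribution is the principal value integral of the antisymmetric kernel $1/(\alpha_c(u)-\alpha_c(v))$ over $A\times A$. It vanishes by antisymmetry under $u\leftrightarrow v$. Summing up gives
$$E_c(A)=B_c(A)-2\int_A\alpha_c+B_c(A),$$
which is (i).

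Part (ii) follows from (i) with $A=[0,|A|]$. In that case there is no pair with $v\in A^c$ and $v<u$, so $B_c(A)=0$.

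The main obstacle is justifying these manipulations, since the kernel is singular on the diagonal. I would handle it by truncating: replace the kernel by its restriction to $|u-v|>\varepsilon$. After truncation every identity above holds exactly. In particular, the antisymmetric $A\times A$ integral is then absolutely convergent and equal to zero. Then let $\varepsilon\to0$. The truncated Hilbert transform converges to $2\alpha_c(u)$ for each $u\in(0,c)$, and it is locally bounded because $\alpha_c$ is smooth with positive derivative in the interior. Near the endpoints $0$ and $c$ I would need a dominated-convergence bound.

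The terms $E_c(A)$ and $B_c(A)$ themselves may be infinite for wild $A$. In that case the identity should be read in $[0,\infty]$, or I would restrict to $A$ a finite union of intervals, which is presumably what is needed later. For such $A$, each boundary point contributes only a logarithmic divergence. If that divergence is not integrable, then both sides of (i) are infinite simultaneously, so the identity still holds in the extended sense.
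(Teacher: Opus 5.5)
Your proposal is correct and is essentially the paper's argument: you split $E_c(A)$ into the $v<u$ and $v>u$ parts, apply the principal-value identity \eqref{eq:StieltjesForSigma_c} over all of $[0,c]$, and kill the $A\times A$ contribution by antisymmetry. The only bookkeeping difference is that the paper adds and subtracts $\iint_{A\times A}$ globally instead of solving for the $v>u$ piece pointwise in $u$, and your $\varepsilon$-truncation supplies a justification the paper omits entirely (the logarithmic singularity at an interface point is in fact integrable, so $E_c(A)$ and $B_c(A)$ are finite for finite unions of intervals and the ``both infinite'' case never arises there).
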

\begin{proof}
Let $A'=[0,c]\setminus A$. 
    By symmetry, $$\int_{A} \int_{A} \frac{dudv}{\alpha_c(u)-\alpha_c(v)}=0.$$ Note that $\alpha$ is monotone, so $\alpha(u)-\alpha(v)>0$ if $v<u$ and $\alpha(u)-\alpha(v)<0$ if $u<v$.  
    
    We compute:
    \begin{eqnarray*}
        E_c(A)&=& \int_{A}\int_{A'} \frac{dudv}{|\alpha_c(u)-\alpha_c(v)|} \\
        &=& \int_{u\in A}\int_{v\in A', v<u} \frac{dudv}{\alpha_c(u)-\alpha_c(v)} - \int_{u\in A}\int_{v\in A', u<v} \frac{dudv}{\alpha_c(u)-\alpha_c(v)} \\
        &=& 2 B_c(A) - \int_{u\in A} \int_{v\in A'} \frac{du dv}{\alpha_c(u)-\alpha_c(v)}  - \int_{u\in A}\int_{v\in A} \frac{du dv}{\alpha_c(u)-\alpha_c(v)} \\
        &=& 2B_c(A) - \int_{u\in A}\int_{0}^c \frac{dv}{\alpha_c(u)-\alpha_c(v)} du \\
        &=& 2B_c(A) - \int_{u\in A} 2\alpha_c(u) du
    \end{eqnarray*}

    This proves (i). For (ii), note that $B_c([0,a])=0$ for any $a$, since $v<u$ never happens if $u\in [0,a]$ and $v
    \in [0,a]'=[a,c]$.
\end{proof}

Let now $$D_c(A) = E_c(A) - E_c( [0,|A|]).$$  Our aim will be to prove that we always have $D_c(A)\geq 0$, showing that of all sets with the same measure as $A$, the interval $[0,|A|]$ minimizes energy.

\begin{lemma} \label{lem:canShrinkOneEnd}
    Let $0<c'<c\leq 1$, and let $A$ be a subset of $[0,c']$.  Then $D_c(A)\geq D_{c'}(A)$.
\end{lemma}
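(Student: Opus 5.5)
The plan is to express both $D_c(A)$ and $D_{c'}(A)$ through Lemma~\ref{lem:EnergyInTermsOfInversion} and then compare them term by term, using the monotonicity of Lemma~\ref{lem:dIsMonotone}. Write $a=|A|$; since $A\subset[0,c']$, we have $a\le c'$. Parts (i) and (ii) of Lemma~\ref{lem:EnergyInTermsOfInversion} give, for $\tilde c\in\{c,c'\}$,
$$D_{\tilde c}(A) = 2B_{\tilde c}(A) + 2\left(\int_0^a \alpha_{\tilde c}(u)\,du - \int_A \alpha_{\tilde c}(u)\,du\right).$$
Put $d(u)=\alpha_{c'}(u)-\alpha_c(u)$ for $u\in[0,c']$. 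Subtracting the two identities yields
$$D_c(A)-D_{c'}(A) = 2\bigl(B_c(A)-B_{c'}(A)\bigr) + 2\left(\int_A d(u)\,du - \int_0^a d(u)\,du\right),$$
and it suffices to show that each bracket is nonnegative.

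\emph{Second bracket.} By Lemma~\ref{lem:dIsMonotone}, $d$ is increasing on $(0,c')$. The sets $A$ and $[0,a]$ have the same measure, so $A\setminus[0,a]$ and $[0,a]\setminus A$ have equal measure. Moreover, $d\ge d(a)$ on the first set and $d\le d(a)$ on the second. Hence
$$\int_A d-\int_0^a d=\int_{A\setminus[0,a]}d-\int_{[0,a]\setminus A}d\ \ge\ 0.$$
This is the usual ``bathtub'' rearrangement argument.

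\emph{First bracket.} In $B_c(A)$ the variables satisfy $u\in A\subset[0,c']$ and $v<u$, so $v\in[0,c']\setminus A$. Thus $B_c(A)$ and $B_{c'}(A)$ are integrals over the same domain $\{(u,v): u\in A,\ v\in[0,c']\setminus A,\ v<u\}$. For $v<u$ in this domain, monotonicity of $d$ gives $d(u)\ge d(v)$, that is,
$$\alpha_{c'}(u)-\alpha_{c'}(v)\ \ge\ \alpha_c(u)-\alpha_c(v)\ >\ 0.$$
The integrand of $B_c$ therefore dominates that of $B_{c'}$ pointwise, so $B_c(A)\ge B_{c'}(A)$.

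I expect the main obstacle to be technical rather than conceptual. First, Lemma~\ref{lem:dIsMonotone} is stated only for $0<u<c'$, and $d$ must be handled at the endpoints; this is harmless since $d$ is continuous. Second, and more delicate, is finiteness: $B_c(A)$ or $E_c(A)$ may be $+\infty$ for rough sets $A$, in which case the subtraction above is not meaningful. I would first treat the case where all quantities are finite, for instance when $A$ is a finite union of intervals, and do the subtraction there. When $B_{c'}(A)=+\infty$, the pointwise domination gives $B_c(A)=+\infty$ as well, so $D_c(A)=D_{c'}(A)=+\infty$ and the inequality holds trivially. The integrals $\int_A\alpha_{\tilde c}$ are always finite, so no other infinities can arise.
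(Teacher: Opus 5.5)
Your proposal is correct and follows the paper's proof essentially step for step. Both arguments use Lemma~\ref{lem:EnergyInTermsOfInversion} to write $D_c(A)-D_{c'}(A)$ as $2(B_c(A)-B_{c'}(A))+2\bigl(\int_A\delta-\int_0^{|A|}\delta\bigr)$ with $\delta=\alpha_{c'}-\alpha_c$, and then apply the monotonicity from Lemma~\ref{lem:dIsMonotone} to each term: a rearrangement argument for the integral term and a pointwise comparison of integrands for the $B$ term. Your explicit remarks that the domains of $B_c(A)$ and $B_{c'}(A)$ coincide because $v<u\le c'$, and your treatment of possibly infinite values, make precise two points the paper leaves implicit.
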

\begin{proof}
    Let $\delta(u) = \alpha_{c'} (u)  - \alpha_c(u)$.   By Lemma~\ref{lem:EnergyInTermsOfInversion}, \begin{eqnarray*}
    D_c(A) &=&  -2 \int_{A} \alpha_c(u)du + 2B_c(A) + 2 \int_{0}^{|A|} \alpha_{c}(u) du   \\
        & & + 2\int_{A} \alpha_{c'} du - 2B_{c'} (A) - 2\int_{0}^{|A|} \alpha_{c'}(u) du \\
        &=& 2 (B_{c}(A)-B_{c'}(A)) + 2\left( \int_A \delta(u) du - \int_{0}^{|A|} \delta(u) du\right).
    \end{eqnarray*}
    Then by Lemma~\ref{lem:dIsMonotone}, $\delta(u)$ is monotone increasing in $u$. 
    Since $\delta(u)$ is increasing and the sets $A$ and $[0,|A|]$ have the same Lebesgue measure, $$  \int_A \delta(u) du - \int_{0}^{|A|} \delta(u) du \geq 0,
    $$ and thus 
    \begin{eqnarray*}
        D_c(A)&\geq&  2 (B_{c}(A)-B_{c'}(A)) \\
        &=& 2 \iint_{\substack{u\in A,\\v\in [0,c]\setminus A,\\ v<u}}\left[ \frac{1}{\alpha_c(u)-\alpha_c(v)}  - \frac{1}{\alpha_{c'}(u)-\alpha_{c'}(v)}  \right]dudv.
    \end{eqnarray*}
    If  $v<u$, $\delta(u)-\delta(v)\geq 0$, and so 
    $$\alpha_{c'}(u) - \alpha_{c'}(v)  = \alpha_c(u) -\alpha_c(v) + \delta(u) - \delta (v) \geq \alpha_c(u)-\alpha_c(v).$$ Thus 
    $$
        \frac{1}{\alpha_c(u)-\alpha_c(v)}  - \frac{1}{\alpha_{c'}(u)-\alpha_{c'}(v)} \geq 0.  
    $$
    It follows that $$
    D_c(A) \geq 2\iint_{\substack{u\in A,\\v\in [0,c]\setminus A,\\ v<u}}\left[ \frac{1}{\alpha_c(u)-\alpha_c(v)}  - \frac{1}{\alpha_{c'}(u)-\alpha_{c'}(v)}  \right]dudv\geq 0,
    $$
    as claimed.
\end{proof}

We now note that by symmetry of $\rho_c$,  $\alpha_c$ also satisfies a symmetry condition: $$\alpha_c(c-u) = -\alpha_c(u).$$  Thus if $A\subset [0,c]$ and $A' = [0,c]\setminus A$, and $A''$ is the image of $A$ under the map $u\mapsto c-u$, then $$
    E_c(A)=E_c(A') = E_c(A''). 
$$ In particular, $E_c([0,|A|]) = E_{c}([|A|,c]) = E_c([0,c-|A|])$ and therefore 
\begin{equation}
    D_c(A) =  D_c(A') \label{eq:DcVsComplement}
\end{equation}

\begin{lemma} \label{lem:FBIIntervalsUnionUCoords}
    Let $A\subset[0,c]$ be a finite union of intervals. Then
    \[
    D_c(A)\geq 0
    \]
\end{lemma}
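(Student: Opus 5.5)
We argue by induction on the number of interval components of $A$, together with the two reductions already available: the complement symmetry \eqref{eq:DcVsComplement} and the shrinking Lemma~\ref{lem:canShrinkOneEnd}. Throughout, null sets are ignored. We may also assume $c$ is arbitrary in $(0,1]$, since the statement is for each fixed $c$.

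\textbf{Reduction to $c\in A$ or $0\in A$.} If $A$ is empty or all of $[0,c]$, then $D_c(A)=0$. Otherwise, write $A$ as a finite disjoint union of closed intervals.
\begin{itemize}
\item If $c\notin A$, i.e.\ $\sup A = c'<c$, then $A\subset[0,c']$, and Lemma~\ref{lem:canShrinkOneEnd} gives $D_c(A)\ge D_{c'}(A)$. The problem is now the same problem with $c$ replaced by $c'$, but $A$ now touches the right endpoint of $[0,c']$.
\item If $A$ contains a right-end interval $[b,c]$, pass to the complement $A'=[0,c]\setminus A$ using \eqref{eq:DcVsComplement}. Then $A'$ does not reach $c$, and we shrink again.
\end{itemize}

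\textbf{Induction.} Use as induction parameter the number $N(A)$ of interval components of $A$ together with those of its complement, i.e.\ the number of boundary points of $A$ in the open interval $(0,c)$. Passing to the complement does not change $N$. Shrinking from $c$ to $c'=\sup A$ removes the boundary point $c'$, which was interior to $(0,c)$ and becomes the endpoint of $[0,c']$, so $N$ strictly decreases. The plan is:
\begin{enumerate}
\item If $\sup A<c$, shrink to $c'=\sup A$; $N$ drops by one.
\item If $\sup A=c$, replace $A$ by $A'$, which has $\sup A'<c$ (up to null sets), and then shrink.
\end{enumerate}
Each round strictly reduces $N$. The base case is $N=0$, where $A$ is $\emptyset$ or $[0,c]$ in the current ambient interval, and there $D=0$. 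Chaining the inequalities $D_c(A)\ge D_{c'}(\cdot)\ge\cdots\ge 0$ gives the claim.

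\textbf{Points to check.} Lemma~\ref{lem:canShrinkOneEnd} requires $c'>0$ and $A\subset[0,c']$. The case $c'=0$ only occurs when $A$ is null, which is trivial. The complement identity \eqref{eq:DcVsComplement} holds for every $c\le 1$, since it relies only on the symmetry $\alpha_c(c-u)=-\alpha_c(u)$.

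\textbf{Main obstacle.} The key step is careful bookkeeping, showing that the combined complement-and-shrink operation genuinely terminates. One must verify that $N$ decreases at every shrink and is preserved under complementation, and that, with $A$ changing after each complement, every application of Lemma~\ref{lem:canShrinkOneEnd} is legitimate. One also needs finiteness of $E_c$ for finite unions of intervals, so that the differences $D_c$ are well-defined. This follows because the singularity $1/|\alpha_c(u)-\alpha_c(v)|$ across a boundary point is only logarithmically divergent after integration, hence integrable in two dimensions.
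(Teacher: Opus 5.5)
Your proposal is correct and follows essentially the same route as the paper: both arguments alternate the shrinking Lemma~\ref{lem:canShrinkOneEnd} with the complement identity \eqref{eq:DcVsComplement} until the set becomes trivial. The only difference is bookkeeping. You induct on the number of interior boundary points, with one shrink per round; the paper inducts on the number of components, peeling off the rightmost interval $[a_1,a_2]$ with two shrinks and two complementations to get $D_c(A_0\sqcup[a_1,a_2])\geq D_{a_1}(A_0)$. Your version is, if anything, slightly cleaner.
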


\begin{proof}
    The proof is by induction on the number $m$ of component intervals of $A$.  

    Suppose first $m=1$, so that $A = [a_1,a_2]$.  Applying Lemma~\ref{lem:canShrinkOneEnd} with $c'=a_2$ gives us $$D_{c}(A) \geq D_{a_2}([a_1,a_2]).$$  Applying the \eqref{eq:DcVsComplement} with $c=a_2$ shows that 
    $$D_{a_2}([a_1,a_2])=D_{a_2}([0,a_2-a_1])=0,$$ since $[0,a_2-a_1] = [0,|[0,a_2-a_1]|]$.  

    Suppose now the statement is true for any $A$ consisting of at most $m$ intervals, and assume that $A=A_0 \sqcup [a_1,a_2]$ with $A_0$ a set consisting of $m$ intervals. 
    
    Applying  Lemma~\ref{lem:canShrinkOneEnd} with $c'=a_2$ gives us $$D_{c}(A) \geq D_{a_2}(A_0 \sqcup [a_1,a_2]).$$ 
    Let $A' = [0,a_2]\setminus A$ be the complement of $A$ in the set $[0,a_2]$. By \eqref{eq:DcVsComplement}, $D_c(A) \geq D_{a_2}(A) = D_{a_2}(A')$.
    Note that $A'\subset [0,a_1]$ and thus we can apply the same argument as before to conclude that $D_{c} (A)\geq D_{a_2}(A')\geq D_{a_1}(A')$.  Now $A'\subset[0,a_1]$ consists of at most $m+1$ intervals, but is such that both $0$ and $a_1$ are in some interval of $A'$.  Thus if we denote by $A''=[0,a_1]\setminus A'$ the complement of $A'$, then $A''$ consists of at most $m$ intervals, and so $D_{c} (A) \geq D_{a_2}(A'')\geq 0$ by induction. 
\end{proof}

This gives us an immediate corollary: 

\begin{lemma} \label{lem:FBIIntervals}
    Let $X\subset[-1,1]$ be a finite union of intervals. Then
    \[
   J(\sigma(X))\leq 2 E(X).
    \]
\end{lemma}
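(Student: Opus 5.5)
The plan is to transport the statement from $[-1,1]$ to the quantile coordinate $u\in[0,1]$ through $x=\alpha(u)$, which is the same as taking $c=1$ in the notation above. We then read off the inequality from Lemma~\ref{lem:FBIIntervalsUnionUCoords}, and separately evaluate the interval energy explicitly to identify it with $J$.

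\textbf{Change of variables.} Since $\alpha=\alpha_1$ is an increasing homeomorphism $[0,1]\to[-1,1]$ pushing Lebesgue measure forward to $\sigma$, we set $A=\alpha^{-1}(X)\subset[0,1]$. Then $A$ is again a finite union of intervals with $|A|=\sigma(X)$. Substituting $x=\alpha(u)$ and $y=\alpha(v)$ in the definition of $E(X)$ gives
\[
E(X)=\iint_{u\in A,\ v\in[0,1]\setminus A}\frac{du\,dv}{|\alpha(u)-\alpha(v)|}=E_1(A).
\]
In the same way $E([-1,\alpha(\sigma(X))])=E_1([0,|A|])$.

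\textbf{Inequality.} Lemma~\ref{lem:FBIIntervalsUnionUCoords} with $c=1$ gives $D_1(A)\ge 0$, that is, $E(X)\ge E([-1,\alpha(\sigma(X))])$. It therefore suffices to prove the identity $J(s)=2E_1([0,s])$ for $s\in[0,1]$, which is also the equality stated in Theorem~\ref{thrm:optimalSet}.

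\textbf{Computing the interval energy.} By Lemma~\ref{lem:EnergyInTermsOfInversion}(ii),
\[
2E_1([0,s])=-4\int_0^s\alpha(u)\,du .
\]
Substituting $u=\alpha^{-1}(t)$, so that $du=\rho(t)\,dt$, and writing $a=\alpha(s)$, this becomes
\[
-4\int_{-1}^{a}t\,\frac{2}{\pi}\sqrt{1-t^2}\,dt .
\]
The integrand has the antiderivative $-\frac{2}{3\pi}(1-t^2)^{3/2}$, and this vanishes at $t=-1$. Hence
\[
2E_1([0,s])=\frac{8}{3\pi}\bigl(1-a^2\bigr)^{3/2}=J(s),
\]
which is exactly $J$. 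Combining the three steps gives $J(\sigma(X))=2E([-1,\alpha(\sigma(X))])\le 2E(X)$.

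\textbf{Main obstacle.} The only delicate point is justifying the principal-value manipulations behind Lemma~\ref{lem:EnergyInTermsOfInversion} when $c=1$ and the set is an interval. Near the endpoints $u=0,1$ we have $\alpha'(u)\sim u^{-1/3}$, so integrability of $1/|\alpha(u)-\alpha(v)|$ across $\partial A$ must be checked. The singularity along the diagonal is only logarithmic, so $E_1(A)$ is finite and the earlier lemmas apply as stated. I would simply cite them, remarking that finiteness holds for finite unions of intervals.
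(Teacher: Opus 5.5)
Your proposal is correct and follows the paper's proof: you pass to quantile coordinates via $A=\alpha^{-1}(X)$, apply Lemma~\ref{lem:FBIIntervalsUnionUCoords} with $c=1$, and then identify $2E_1([0,s])$ with $J(s)$. The only difference is that you evaluate the interval energy by integrating the closed formula of Lemma~\ref{lem:EnergyInTermsOfInversion}(ii), while the paper differentiates $a\mapsto E_1([0,a])$ and uses the principal-value identity directly; this is the same computation in integrated form, and it is slightly cleaner because it avoids differentiating under a singular integral.
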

\begin{proof}
    We apply Lemma~\ref{lem:FBIIntervalsUnionUCoords} to the set $A = \alpha^{-1}(X)$ to conclude that 
    \begin{eqnarray*}
        E(X) &=& \iint_{x\in X,y\in [-1,1]\setminus X} \frac{d\sigma(x)d\sigma(y)}{|x-y|}  \\
        &=& \iint_{u\in A,v\in[0,1]\setminus A}\frac{dudv }{|\alpha(u)-\alpha(v)|}  \\ 
        &=& E_1(A) \\ 
        &\geq& E_1([0,|A|]) 
    \end{eqnarray*}
    Let $$f(a) = E_1([0,a]) = 
        \int_{0}^{a} \int_{a}^1 \frac{dudv}{\alpha(u)-\alpha(v)}.$$
    Then 
    \begin{eqnarray*}f'(a) &=& \int_{0}^a \frac{du}{\alpha(u)-\alpha(a)}  -\int_{a}^1 \frac{dv}{\alpha(a)-\alpha(v) } \\&=& 
    - \operatorname{P.V.} \int_{0}^1 \frac{dv}{\alpha(a)-\alpha(v)} = -2 \alpha(a), \end{eqnarray*} where for the last equality we used \eqref{eq:StieltjesForSigma_c}.

    Let $x=\alpha(a)$, so that $$\frac{da}{dx} = \rho_1(x) = \frac{2}{\pi}{\sqrt{1-x^2}}.$$ Thus
    $$\frac{df}{dx} = -2\alpha(x) \rho(x) = -\frac{4}{\pi} x\sqrt{1-x^2}.$$  Since $f(0)=0$, we get $$f(x) = \frac{4}{3\pi} (1-x^2)^{3/2}$$ so $$f(a) = \frac{4}{3\pi} (1-\alpha(a)^2)^{3/2} = \frac{1}{2}J(a). $$

    In particular, $E_1([0,|A|]) = f(|A|)$ with $|A| = |\alpha^{-1}(X)| =\sigma(X)$, and thus 
    $$2E(X)\geq J(\sigma(X)),$$ as claimed.
\end{proof}

Theorem~\ref{thrm:optimalSet} now follows by a standard argument approximating an arbitrary measurable set by unions of intervals, and  Lemma~\ref{lem:FBIIntervals}.

\section{Proof of Theorem~\ref{thrm:FBI}}
Recall that $J(x)=\frac{8}{3\pi} (1-\alpha(x)^2)^{3/2}$, where $\alpha(x)$ satisfies $$
\int_{-1}^{\alpha(x)} \frac{2}{\pi} \sqrt{1-t^2} dt = x.$$ Differentiating this gives $$\frac{2}{\pi} \sqrt{1-\alpha(x)^2} \alpha'(x)=1.$$ 
Thus
$$
J'(x) = -\frac{8}{\pi} \alpha(x)\sqrt{1-\alpha(x)^2} \alpha'(x) = -4\alpha(x).
$$
In particular, $$J''(x) = -4\alpha'(x) <0 $$ since $\alpha$ is the inverse of $x\mapsto \int_{-1}^x d\sigma(t)$, which is a monotone-increasing function. In particular, $J$ is a concave function.  

Let $h (x) = - 1/J''(x) = 1/(4\alpha'(x)) = \frac{1}{2\pi} {\sqrt{1-\alpha(x)^2}}$.  
Computing gives $$h''(x)=-\frac{\pi(1+\alpha(x)^2)}{8(1-\alpha(x)^2)^{5/2}}<0,$$ so $h$ is also concave.

Let now $f:[-1,1]\to[0,1]$ be a measurable function. 
Let $$\mathscr{E}(f) = \iint \left|\frac{f(x)-f(y)}{x-y}\right| d\sigma(x) d\sigma(y)$$ (interpreted as $+\infty$ if the integral diverges). 

For each $t\in [0,1]$, let $$A_t = \{x : f(x)> t\} $$ be a measurable subset of $[-1,1]$.  Then 
$$
f(x) =\int_{0}^1 1_{A_t} (x) dt, \qquad |f(x)-f(y)|=\int_{0}^1 |1_{A_t} (x) - 1_{A_t} (y)| dt.
$$

Thus $$\mathscr{E}(f) = \int_{0}^1\iint  \left| \frac{1_{A_t} (x) - 1_{A_t} (y)}{x-y}\right| d\sigma(x)d\sigma(y) dt = 
\int_0^1 2E(A_t) dt.$$

Applying the layer-cake lemma to $J\left(\int fd\sigma)\right) - \int J\circ f d\sigma $ (which is possible since $J$ is concave and $1/J''$ is concave) gives the inequality
$$J\left(\int fd\sigma\right) - \int J\circ f d\sigma \leq \int_0^1 J(\sigma(A_t)) dt. $$

Applying Theorem~\ref{thrm:optimalSet} to $A_t$ for each $t$ gives us $$ J(\sigma(A_t)) dt  \leq \int_0^1 2E(A_t) dt = \mathscr{E}(f).$$ Putting this together gives Theorem~\ref{thrm:FBI}.

\section{An analog of Theorem~\ref{thrm:optimalSet} for roots of Hermite polynomials.}
Let $H_n$ be the $n$-th Hermite polynomial \cite[\S5.5]{Szego} determined recursively by:
$$H_0(x)=1,\qquad H_1(x) = 2x,$$
$$H_{n+1}(x) = 2x H_n(x) - 2n H_{n-1}(x),\qquad n\geq 1.$$
Then $H'(x) = 2n H_{n-1}(x)$ and these polynomials satisfy the differential equation $$
H_n''(x) - 2x H'_n(x) + 2nH_n(x) = 0.$$  It is well known \cite[\S3.3 and Chapter 5]{Szego} that if the roots $\xi_{n,1},\dots,\xi_{n,n}$ of these polynomials are scaled by the factor $(2n)^{-1/2}$, then they lie in the interval $[-1,1]$. Moreover, the atomic probability measures $\sigma_n = (1/n)\sum_j \delta_{\xi_{n,j}}$ converge weakly as $n\to\infty$ to the semicircle law. In this section, we prove an analog of Theorem~\ref{thrm:optimalSet} for subsets of the set of roots of $H_n$. This result is presented in Theorem~\ref{thrm:HermiteIntervalOptimal} and can in fact be used to derive Theorem~\ref{thrm:HermiteIntervalOptimal} by a limit argument.

We first gather some standard facts about the polynomials $H_n$ needed for our proof, and summarize them as a Lemma. We could not find a precise reference for statement (\ref{fact:diMonotone}), so we give a direct argument loosely based on \cite{IsmailMuldoon1991}.  We refer the reader to \cite[\S3.3, \S3.4]{Szego}  for the rest of the listed properties of the roots. 

\begin{lemma} The following properties hold: \label{lem:hermiteFacts}
\begin{enumerate}\renewcommand{\theenumi}{\alph{enumi}}
    \item Each $H_n(x)$ has $n$ simple real roots $\xi_{n,1}<\xi_{n,2}<\dots<\xi_{n,n}$ 
    \item The set of roots is symmetric about zero.
    \item The roots of $H_{n+1}$ and $H_n$ interlace: $$ \xi_{n+1,1} < \xi_{n,1} <\xi_{n+1,2}<\xi_{n,2} <\cdots <\xi_{n,n} < \xi_{n+1,n+1}. $$
    \item For each $1\leq i\leq n$,\ \  $\displaystyle \sum_{j\neq i} \frac{1}{\xi_{n,i}-\xi_{n,j}} = \xi_{n,i}.$ \label{fact:xiRootStietjes}
    
    \item \label{fact:rootStietjes}Let $x_{n,i} = {(2n)}^{-1/2}\xi_{n,i}$ be rescaled roots of $H_n$. Then $x_{n,i} \in [-1,1]$, and each $1\leq i\leq n$, $$\sum_{j\neq i} \frac{1}{x_{n,i}-x_{n,j}} = 2n \ x_{n,i}.$$ 
    \item \label{fact:diMonotone} Let $d_i = x_{n-1,i}-x_{n,i}$. Then $$d_1<d_2<\cdots<d_{n-1}.$$
    \end{enumerate}
\end{lemma}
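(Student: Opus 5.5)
Items (a)--(c) are classical and I would simply cite \cite[\S3.3]{Szego}: orthogonality gives $n$ simple real zeros, $H_n(-x)=(-1)^nH_n(x)$ gives the symmetry, and interlacing follows from the three-term recurrence via the Christoffel--Darboux identity. For (d) I would use the differential equation. Evaluated at a root $\xi=\xi_{n,i}$ it gives $H_n''(\xi)=2\xi H_n'(\xi)$. Writing $H_n=c\prod_j(x-\xi_{n,j})$, the logarithmic derivative identity gives $H_n''(\xi)/H_n'(\xi)=2\sum_{j\neq i}(\xi-\xi_{n,j})^{-1}$, which is exactly (d). Item (e) is (d) after substituting $\xi=\sqrt{2n}\,x$: both sides pick up a factor $\sqrt{2n}$, giving $2n\,x_{n,i}$ on the right. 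The bound $|x_{n,i}|\le 1$, i.e.\ $\xi_{n,n}\le\sqrt{2n}$, I would quote from Szeg\H{o}'s bounds on the largest zero in \cite[\S6.32]{Szego}.

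The real content is (f), and my plan is to reduce it to a statement about critical points of a single polynomial. Since $H_n'=2nH_{n-1}$, the roots $\xi_{n-1,i}$ are the critical points of $H_n$. Let $c_i\in(x_{n,i},x_{n,i+1})$ be the critical points of $P(x)=\prod_j(x-x_{n,j})$, for $i=1,\dots,n-1$. These are the $\xi_{n-1,i}$ rescaled by $(2n)^{-1/2}$, so $x_{n-1,i}=s\,c_i$ with $s=\sqrt{n/(n-1)}>1$. Then
\[
d_i=(c_i-x_{n,i})+(s-1)c_i .
\]
The second term is strictly increasing in $i$ because the $c_i$ increase and $s>1$. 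Hence it suffices to show that $e_i:=c_i-x_{n,i}$ is nondecreasing in $i$.

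To control $e_i$, I would use two facts. First, the critical point equation $\sum_j (c_i-x_{n,j})^{-1}=0$. Second, the rescaled ODE $P''(x)-4nxP'(x)+4n^2P(x)=0$. I would also use the symmetry $c_{n-i}=-c_i$, which turns the desired claim into $e_i\le x_{n,i+1}-c_i$ for $i$ in the left half. I would first try to compare consecutive gaps through the ODE, in the spirit of \cite{IsmailMuldoon1991}. On $(x_{n,i},x_{n,i+1})$, the function $y=P\,e^{-nx^2}$ satisfies $y''+q(x)y=0$ with $q(x)=4n^2+2n-4n^2x^2$. This $q$ is increasing on $[-1,0]$, so it is increasing on the left half of the spectrum. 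A Sturm-type comparison then shows that the relative position of the interior maximum of $|y|$ moves rightward within successive nodal intervals as $q$ increases.

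The complication is that $c_i$ is the critical point of $P$ rather than of $y$. The critical point of $y$ sits at $c_i^*$ with $P'(c_i^*)=2nc_i^*P(c_i^*)$, which lies slightly to the right of $c_i$ on the left half. So I would track the correction $c_i-c_i^*$ and absorb it using the spare positive term $(s-1)(c_{i+1}-c_i)$ in the difference $d_{i+1}-d_i$. This bookkeeping, namely establishing monotonicity of the relative position of extrema together with handling the shift between extrema of $P$ and of $P\,e^{-nx^2}$, is where I expect the main difficulty. If the Sturm route becomes messy, my fallback is a Hellmann--Feynman argument. I would embed $n$ in a continuous parameter $\nu$ using the eigenvalues of the Jacobi matrices, whose off-diagonal entries are $\sqrt{k/(4n)}$, and differentiate the gaps in $\nu$.
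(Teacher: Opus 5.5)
\paragraph{Items (a)--(e) and the reduction.} Your handling of (a)--(e) is fine. The derivation of (d) from $H_n''(\xi)=2\xi H_n'(\xi)$ is correct, and for the bound in (e), Gershgorin applied to your rescaled Jacobi matrix already gives $|x_{n,i}|\le\sqrt{(n-1)/n}$. Your reduction of (f) to monotonicity of $e_i=c_i-x_{n,i}$ is also correct. Monotonicity of $e_i$ (equivalently of $t_i=\xi_{n-1,i}-\xi_{n,i}$) is exactly what the paper proves. It is also the version used in Lemma~\ref{lem:ExvsEy}, where $y_i=(2n)^{-1/2}\xi_{n-1,i}$ carries the same scaling as $x_i$. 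So you cannot count on the spare term $(s-1)(c_{i+1}-c_i)$.

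\paragraph{The gap.} The problem is your route to monotonicity of $e_i$: the Sturm comparison controls the wrong points, and the shift you treat as a correction is the whole effect. The extrema $c_i^*$ of $Pe^{-nx^2}$ solve $\sum_j(c-x_{n,j})^{-1}=2nc$. This cancels the mean-field part ($\approx 2nc$) of the sum, so a local large-$n$ computation puts them at the midpoints of the nodal intervals to leading order. Thus $c_i^*-x_{n,i}\approx\tfrac12 g$ with local gap $g\approx\pi/(2n\sqrt{1-c^2})$, which \emph{decreases} toward the center on the left half, the opposite of what you need.

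The critical points of $P$ instead satisfy $\frac{\pi}{g}\cot(\pi\theta)\approx-2nc$ for the relative position $\theta$. This gives $e_i\approx\arccos(-c_i)/(2n\sqrt{1-c_i^2})$, which increases. Hence $c_i^*-c_i\approx(\tfrac12-\theta_i)g$ is a fixed fraction of the gap: toward the edge $c_i$ hugs $x_{n,i}$ while $c_i^*$ stays near the midpoint. The wrong-sign increments of $c_i^*-x_{n,i}$ have size about $\pi^2|c|/(8n^2(1-c^2)^2)$. These already exceed the spare term $\approx\pi/(4n^2\sqrt{1-c^2})$ once $|c|$ is larger than about $1/2$. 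So all of the monotonicity must come from $c_i^*-c_i$, and controlling it means controlling the critical points of $P$ directly, which is the original problem.

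Three smaller points. Relative-position statements about extrema of $Pe^{-nx^2}$ could not suffice anyway, because the gaps shrink toward the center. The inequality $e_i\le x_{n,i+1}-c_i$ you get from symmetry is just $e_i\le e_{n-i}$, a consequence of monotonicity rather than a reformulation of it. The Hellmann--Feynman fallback is too unspecified to assess, since the index $i$ is not the parameter being varied.

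\paragraph{The missing idea.} Work with the logarithmic derivative $R=H_n'/H_n$ itself rather than with an oscillatory normal form. It satisfies the Riccati equation $R'=-R^2+2xR-2n$. Set $Z_i(t)=R(\xi_{n,i}+t)$. The critical point $\xi_{n-1,i}$ of $H_n$ in $(\xi_{n,i},\xi_{n,i+1})$ corresponds to the unique zero $t_i$ of $Z_i$, and $Z_i>0$ on $(0,t_i)$. The difference $W_i=Z_{i+1}-Z_i$ satisfies the \emph{linear} equation $W_i'=\bigl[2(\xi_{n,i+1}+t)-Z_{i+1}-Z_i\bigr]W_i+2(\xi_{n,i+1}-\xi_{n,i})Z_i$. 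Its forcing term is positive exactly while $Z_i>0$, i.e.\ on $(0,t_i)$. Its initial value is $W_i(0^+)=\xi_{n,i+1}-\xi_{n,i}>0$ by (d), since the $1/t$ singularities cancel. At a first zero of $W_i$ in $(0,t_i)$ the equation would force $W_i'>0$, which is impossible. Hence $Z_{i+1}>0$ on $(0,t_i)$, and so $t_{i+1}\ge t_i$. This gives monotonicity of $e_i=(2n)^{-1/2}t_i$ with no Sturm comparison and no asymptotics. Your decomposition $d_i=e_i+(s-1)c_i$ then yields (f) as literally stated, with strict inequalities.
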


\begin{proof}[Proof of (\ref{fact:diMonotone})]
    Fix $n$ and $1\leq i\leq n-2$. 

    Let $$R(x) = \frac{H'_n(x)}{H_n(x)} = \sum_{j=1}^n \frac{1}{x-\xi_{n,j}}.$$  The function $R(x)$ satisfies the Riccati equation \begin{equation}\label{eqn:RiccatiForR} R'(x) = -R(x)^2 + 2xR(x) - 2n.\end{equation}
    
    For $0<t< \xi_{n,i+1}-\xi_{n,i}$, let $$ Z_i(t) = R(\xi_{n,i} + t).$$
    Then $Z_i(t)$ is a smooth function of $t$ on $(0,\xi_{n,i+1}-\xi_{n,i})$.  
    
    Since the roots of $H_n$ are simple,  $Z_i(t)\to +\infty$ as $t\downarrow 0$ and $Z_i(t)\to -\infty$ as $t\uparrow \xi_{n,i+1}-\xi_{n,i}$. Thus $Z_i(t)$ has a zero in the interval $(0,\xi_{n,i+1}-\xi_{n,i})$.  If $Z_i(t)=0$, then $$ 0 = R(\xi_{n,i}+t)= \frac{H_n'(\xi_{n,i}+t)}{H_n(\xi_{n,i}+t)},$$ so that $H_n'(\xi_{n,i}+t)=0$. Since the zeros of $H_n$ and $H_n'$ interlace, this means that $\xi_{n,i}+t=\xi_{n-1,i}$. 
    
    Thus the unique zero of $Z_i(t)$ occurs at $t_i = \xi_{n-1,i} - \xi_{n,i}$; moreover, $Z_i(t)>0$ on the interval $(0,t_i)$. 

    We claim that it's sufficient to prove that
    \begin{equation}
        \label{eq:Ziplus1vsZi}
        Z_{i+1}(t)>Z_i(t),\qquad 0<t<t_i. 
    \end{equation}
    Indeed, suppose for a contradiction that $d_{i+1} < d_i$ for some $i=1,\dots,n-2$, i.e., that $t_{i+1}<t_i$.  Let $s=t_{i+1}$.  Then $Z_{i+1}(s)=0$. Since by assumption $s= t_{i+1}< t_i$, we have that $Z_{i}(s)>0 = Z_{i+1}(s)$, contradicting \eqref{eq:Ziplus1vsZi}.

    To prove \eqref{eq:Ziplus1vsZi}, let $$W_i(t) = Z_{i+1}(t) - Z_i(t).$$  Since $$Z_i(t) = \frac{1}{t} + \sum_{j\neq i} \frac{1}{\xi_{n,i} + t - \xi_{n,j} },$$ we have that $$\lim_{t\downarrow 0} \left(Z_{i+1}(s) - Z_i(s)\right)
    = \sum_{j\neq i+1} \frac{1}{\xi_{n,i+1} - \xi_{n,j}} - \sum_{j\neq i} \frac{1}{\xi_{n,i} - \xi_{n,j}}  
    = \xi_{n,i+1}-\xi_{n,i}$$ by part (\ref{fact:xiRootStietjes}) of the Lemma. Since the roots are listed in a monotone order, it follows that $$\lim_{t\downarrow 0} \left(Z_{i+1}(t) - Z_i(t)\right) \geq 0,$$ and so $W_i(t)\geq 0$ for all sufficiently small $t>0$.

    The Ricatti equation \eqref{eqn:RiccatiForR} and the definition of $Z_i$ implies $$Z'_i(t) = - Z_i(t)^2 + 2 (\xi_{n,i} +t) Z_i(t)-2n.$$ Since $W_i(t) = Z_{i+1} (t) - Z+i(t)$, we get \begin{eqnarray*}
        W'_i(t) &=& - (Z_{i+1}(t)^2 - Z_{i}(t)^2) + 2 (\xi_{n,i+1} + t) Z_{i+1}(t) - 2 (\xi_{n,i} + t) Z_{i}(t) \\
        &=& [ -Z_{i+1}(t) - Z_{i}(t) + 2 (\xi_{n,i+1} + t) ] W_{i}(t) + 2(\xi_{n,i+1}-\xi_{n,i}) Z_i(t).
    \end{eqnarray*}
    If $W_i(t)$ is not positive on $(0,t_i)$, then there must exist some $t_0$, $0<t_0<t_i$, so that $W_i(t)>0$ for all $0<t<t_0$ and $W_i(t_0)=0$.  Thus $W_i'(t_0)\leq 0$.   Substituting this into the equation for $W'_i(t)$ above gives
    $$
    W_i'(t_0) =  2(\xi_{n,i+1}-\xi_{n,i}) Z_i(t_0) > 0$$ since $t_0<t_i$, $Z_i(t)>0$ on $(0,t_i)$, and the roots are increasing.  But this contradicts $W_i'(t_0)\leq 0$. 
\end{proof}

\begin{lemma} \label{lem:ESisSumAndB}
    Suppose that $z_1 < z_2 \dots < z_N$ are $N$ points that satisfy $$ \sum_{j\neq i} \frac{1}{z_i-z_j} = cz_i$$ for all $i=1,\dots,N$. 
    
    For $S\subset\{1,\dots,N\}$ put $$E_z(S) = \sum_{i\in S,j\notin S}\frac{1}{|z_i -z_j|},\qquad B_z(S)=\sum_{i\in S,j\notin S,i>j} \frac{1}{z_i-z_j}.$$ Then $$ E_z(S) = -c \sum_{i\in S} z_i + 2 B_z(S).$$ 
\end{lemma}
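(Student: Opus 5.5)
The plan is to copy the argument of Lemma~\ref{lem:EnergyInTermsOfInversion}, replacing the integrals by finite sums. The principal value there is replaced by the hypothesis $\sum_{j\neq i} (z_i-z_j)^{-1} = c z_i$. Because the sums are finite, no convergence issues arise, and the identity should follow by a short rearrangement.

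Write $S' = \{1,\dots,N\}\setminus S$. Since $z_1<\dots<z_N$, we have $z_i-z_j>0$ exactly when $i>j$. I will first split $E_z(S)$ according to the order of the indices:
\[
E_z(S) = \sum_{i\in S,\, j\in S',\, i>j} \frac{1}{z_i-z_j} \;-\; \sum_{i\in S,\, j\in S',\, i<j} \frac{1}{z_i-z_j}.
\]
The first sum is $B_z(S)$. For the second, I will write the sum over $i<j$ as the full sum over $i\in S$, $j\in S'$ minus the part with $i>j$, which gives
\[
E_z(S) = 2B_z(S) - \sum_{i\in S}\sum_{j\in S'} \frac{1}{z_i-z_j}.
\]

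Next I will add the term $\sum_{i\in S}\sum_{j\in S,\, j\neq i} (z_i-z_j)^{-1}$. It vanishes by antisymmetry, since swapping $i$ and $j$ changes its sign. After adding it, the double sum becomes $\sum_{i\in S}\sum_{j\neq i}(z_i-z_j)^{-1}$, and the hypothesis turns this into $\sum_{i\in S} c z_i$. This yields $E_z(S) = -c\sum_{i\in S} z_i + 2B_z(S)$.

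There is no real obstacle here. The only point needing care is the bookkeeping: the diagonal $j=i$ must be excluded throughout, and the antisymmetric term must be confirmed to vanish, which holds because it is a finite sum over ordered pairs of distinct indices.
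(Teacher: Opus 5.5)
Your proposal is correct and follows essentially the same route as the paper. You split by the sign of $z_i-z_j$ to get $E_z(S)=2B_z(S)-\sum_{i\in S,\,j\notin S}(z_i-z_j)^{-1}$, add the vanishing antisymmetric sum over distinct pairs in $S$, and apply the hypothesis, which is exactly how the paper argues.
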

\begin{proof}
    Notice that by symmetry $\sum_{i,j\in S,i\neq j} (z_i-z_j)^{-1} = 0$. Moreover, if $i>j$, then $z_i-z_j>0$, and if $i<j$, then $z_i-z_j <0$. Using this, we get:
    \begin{eqnarray*} 
        E_z(S) &=& \sum_{i\in S,j\notin S,i<j}\frac{1}{|z_i-z_j|}  + \sum_{i\in S,j\notin S,i>j}\frac{1}{|z_i-z_j|} \\ &=& - \sum_{i\in S,j\notin S}\frac{1}{z_i-z_j} + 2\sum_{i\in S,j\notin S,i>j}\frac{1}{z_i-z_j} \\ &=& -\sum_{i\in S,j\notin S}\frac{1}{z_i-z_j} - \underbrace{\sum_{i,j \in S, i\neq j}\frac{1}{z_i-z_j}}_{0} + 2B_z(S) \\
        &=& -\sum_{i\in S} \sum_{j\neq i} \frac{1}{z_i-z_j} + 2B_z(S) \\
        &=& - c\sum_{i\in S} z_i + 2B_z(S),
    \end{eqnarray*} where in the next to last line we used the hypothesis of the Lemma.
\end{proof}
\begin{lemma} \label{lem:energyVsCrossover}
    Keeping the notations of Lemma~\ref{lem:ESisSumAndB}, assume that $S\subset \{1,\dots,N\}$, and let $m=|S|$.  Put $I_m=\{1,\dots,m\}$. Then $$E_z(S)-E_z(I_m) = 2  B_z(S) - c \left(\sum_{i\in S}z_i - \sum_{i=1}^m z_i\right).$$
\end{lemma}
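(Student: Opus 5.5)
This follows by applying Lemma~\ref{lem:ESisSumAndB} twice, once to $S$ and once to $I_m$, and subtracting. Both sets are subsets of $\{1,\dots,N\}$, and the points $z_i$ satisfy the same hypothesis $\sum_{j\neq i}(z_i-z_j)^{-1}=cz_i$ in both cases. The lemma therefore gives
\[
E_z(S) = -c\sum_{i\in S} z_i + 2B_z(S), \qquad E_z(I_m) = -c\sum_{i=1}^m z_i + 2B_z(I_m).
\]

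The only additional input is that $B_z(I_m)=0$. This is the discrete analog of the observation $B_c([0,a])=0$ in Lemma~\ref{lem:EnergyInTermsOfInversion}(ii). The sum defining $B_z(I_m)$ runs over pairs with $i\in I_m$, $j\notin I_m$ and $i>j$. But every $j\notin I_m$ satisfies $j\geq m+1>i$ for all $i\in I_m$, so the index set is empty and the sum vanishes.

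Subtracting the second identity from the first then gives
\[
E_z(S)-E_z(I_m) = 2B_z(S) - c\left(\sum_{i\in S} z_i - \sum_{i=1}^m z_i\right),
\]
which is the claim. There is no real obstacle. The only point to check is the emptiness of the index set for $B_z(I_m)$, which relies on the ordering convention $z_1<\dots<z_N$ and on $I_m$ being an initial segment.
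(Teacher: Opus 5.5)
Your proposal is correct and follows essentially the same route as the paper: apply Lemma~\ref{lem:ESisSumAndB} to both $S$ and $I_m$, note that $B_z(I_m)=0$ because its index set $\{i\in I_m,\ j\notin I_m,\ i>j\}$ is empty, and subtract. One minor remark: that emptiness depends only on $I_m$ being an initial segment of indices, not on the ordering of the values $z_i$; the ordering is needed only inside Lemma~\ref{lem:ESisSumAndB} itself.
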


\begin{proof}
    Since if $i\in I_m$, $j\notin I_m$, $i<j$, $B_z(I_m)=0$.  
    By Lemma~\ref{lem:ESisSumAndB}, $$E_z(I_m) = -c \sum_{i=1}^m z_i + 2 B_z(I_m) = -c \sum_{i=1}^m z_i.$$
    Using this and the same Lemma gives $$E_z(S)-E(I_m) = 2B(S) - c \sum_{i\in S} z_i + c \sum_{i=1}^m z_i,$$
    which is the claimed expression.
\end{proof}

Let now $x_1<x_2<\dots<x_n$ be the rescaled roots of $H_n$, $x_i = (2n)^{-1/2} \xi_{n,i}$, and let $y_1<y_2<\dots,y_{n-1}$ be the roots of $H_{n-1}$ rescaled by the same constant, $y_i = (2n)^{-1/2} \xi_{n-1,i}$.  Then both sets $x_1<\cdots<x_n$ and $y_1<\cdots<y_{n-1}$ satisfy the hypothesis of Lemma~\ref{lem:ESisSumAndB} with $c=2n$. For $S\subset\{1,\dots,n\}$ and $S'\subset\{1,\dots,n-1\}$ we use the notation of Lemma~\ref{lem:ESisSumAndB}: $E_x(S) = \sum_{i\in S, j\in I_n\setminus S} |x_i-x_j|^-1$, $E_y(S) = \sum_{i\in S', j\in I_{n-1}\setminus S'} |y_i-y_j|^{-1}$. 

For $S\subset\{1,\dots,n\}$ let $T_L(S) = S\cap \{1,\dots,n-1\} \subset \{1,\dots,n-1\}$.  

\begin{lemma} \label{lem:ExvsEy}
Keeping the above notation, let $m=|S|$, $m'=|T_L(S)|$.  Then  $$E_x(S)-E_x(I_m) \geq E_y(T_L(S))-E_y(I_{m'}).$$  
\end{lemma}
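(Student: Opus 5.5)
The plan is to mirror the proof of Lemma~\ref{lem:canShrinkOneEnd}, with Lemma~\ref{lem:energyVsCrossover} in the role of Lemma~\ref{lem:EnergyInTermsOfInversion} and Lemma~\ref{lem:hermiteFacts}(\ref{fact:diMonotone}) in the role of Lemma~\ref{lem:dIsMonotone}. Both configurations satisfy the hypothesis of Lemma~\ref{lem:ESisSumAndB} with the same constant $c=2n$. Set $d_i=y_i-x_i$ for $1\le i\le n-1$; this is $d_i$ of Lemma~\ref{lem:hermiteFacts} rescaled by the positive factor $\sqrt{(n-1)/n}$, so it is still strictly increasing in $i$. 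Write $S'=T_L(S)$. By Lemma~\ref{lem:energyVsCrossover}, applied to $x$ and to $y$,
\[
\bigl(E_x(S)-E_x(I_m)\bigr)-\bigl(E_y(S')-E_y(I_{m'})\bigr)=2\bigl(B_x(S)-B_y(S')\bigr)-2n\Bigl(\sum_{i\in S}x_i-\sum_{i\le m}x_i-\sum_{i\in S'}y_i+\sum_{i\le m'}y_i\Bigr),
\]
and the proof consists of showing that this quantity is nonnegative.

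\emph{Case $n\notin S$.} Here $S'=S$ and $m'=m$, so the linear part equals $2n\bigl(\sum_{i\in S}d_i-\sum_{i\le m}d_i\bigr)$. This is $\ge 0$, because $d$ is increasing and $S$ and $I_m$ have the same cardinality. For the $B$ terms, every pair $(i,j)$ in $B_x(S)$ has $j<i\le n-1$, so $B_x(S)$ and $B_y(S)$ are sums over the same index pairs. For $j<i$ we have $y_i-y_j=x_i-x_j+d_i-d_j\ge x_i-x_j>0$, so each term of $B_x$ dominates the corresponding term of $B_y$, and therefore $B_x(S)\ge B_y(S)$.

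\emph{Case $n\in S$.} Here $S'=S\setminus\{n\}$ and $m'=m-1$. The same argument applies to all pairs with $i\le n-1$ and to the $d$-sums over $S'$ and $I_{m'}$. What remains is
\[
2\sum_{j\notin S}\frac{1}{x_n-x_j}-2n(x_n-x_m),
\]
where the first sum comes from the pairs with $i=n$ in $B_x(S)$. I expect proving that this remainder is nonnegative to be the main obstacle.

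My first attempt will use fact~(\ref{fact:rootStietjes}) to write $2n x_n=\sum_{j<n}(x_n-x_j)^{-1}$ and $2nx_m=\sum_{j\ne m}(x_m-x_j)^{-1}$, and then compare these term by term with $\sum_{j\notin S}(x_n-x_j)^{-1}$. Since $n\in S$ and $|S|=m$, the complement $I_n\setminus S$ has $n-m$ elements, all at most $n-1$. If this direct comparison fails, I will instead try to avoid the case. The reflection $x_i=-x_{n+1-i}$ gives $E_x(S)=E_x(I_n\setminus S)$ and $E_x(I_m)=E_x(I_{n-m})$, and the complement of $S$ does not contain $n$. The cost is that $T_L$ of the complement is not the complement of $S'$ inside $I_{n-1}$, so I would then need to relate the $y$-side quantities through the analogous symmetry of the $y_i$. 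Alternatively, I could check whether the induction in Theorem~\ref{thrm:HermiteIntervalOptimal} needs only the case $n\notin S$, after first passing to complements as in the proof of Lemma~\ref{lem:FBIIntervalsUnionUCoords}.
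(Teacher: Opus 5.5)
Your case $n\notin S$ is the paper's argument. The gap is in the case $n\in S$. The remainder $2\sum_{j\notin S}(x_n-x_j)^{-1}-2n(x_n-x_m)$ that you isolate is \emph{not} nonnegative in general, so no term-by-term comparison via fact~(\ref{fact:rootStietjes}) can prove it. For $n=3$ the rescaled roots are $x=(-\tfrac12,0,\tfrac12)$. For $S=\{2,3\}$, $m=2$, the remainder is $2\cdot 1-6\cdot\tfrac12=-1$. Both sides of the lemma are $0$ here. The deficit is covered exactly by the pieces you set aside as merely nonnegative: $2\bigl(\frac{1}{x_2-x_1}-\frac{1}{y_2-y_1}\bigr)=4-2\sqrt3$ and $6(d_2-d_1)=2\sqrt3-3$, where $y=(-\tfrac{1}{2\sqrt3},\tfrac{1}{2\sqrt3})$. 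Once those terms are dropped, this case cannot be closed.

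The repair is your first fallback, which is what the paper does, and the obstacle you anticipate there does not exist. Since $n\in S$, the complement $S^c=I_n\setminus S$ lies in $I_{n-1}$, so $T_L(S^c)=S^c=I_{n-1}\setminus T_L(S)$. Hence $E_x(S)=E_x(S^c)$ and $E_y(S^c)=E_y(T_L(S))$ follow directly from the symmetry of the double sum defining $E$, with no reflection needed. The reflections $x_i=-x_{n+1-i}$ and $y_i=-y_{n-i}$ are used only for $E_x(I_m)=E_x(I_{n-m})$ and $E_y(I_{m-1})=E_y(I_{n-m})$. Applying your first case to $S^c$, which has size $n-m$ and does not contain $n$, gives
\[
E_x(S)-E_x(I_m)=E_x(S^c)-E_x(I_{n-m})\ge E_y(S^c)-E_y(I_{n-m})=E_y(T_L(S))-E_y(I_{m-1}).
\]
A minor point: $y_i-x_i$ is not a positive multiple of the mixed-scale $d_i=x_{n-1,i}-x_{n,i}$ as literally stated in Lemma~\ref{lem:hermiteFacts}(\ref{fact:diMonotone}). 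What its proof actually shows is that $t_i=\xi_{n-1,i}-\xi_{n,i}$ is increasing, and $y_i-x_i=(2n)^{-1/2}t_i$.
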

\begin{proof}
    Suppose first that $n\notin S$, i.e.,  $T_L(S) = S$, so $m=m'$.  Let $d_i = y_i-x_i$. Write $S=\{s_1,\dots,s_m\}$ where $s_1<\dots<s_m$.  By Lemma~\ref{lem:hermiteFacts}.\ref{fact:diMonotone}, $d_1<d_2<\cdots<d_{n-1}$.  Therefore, using Lemma~\ref{lem:energyVsCrossover},
    \begin{gather*}
        E_x(S)-E_x(I_m) - E_y(S) + E_y(I_m) = 2B_x(S)-2B_y(S)  \\
             + 2n\left( \sum_{i\in S } x_i - \sum_{i=1}^m x_i - \sum_{i\in S} y_i + \sum_{i=1}^m x_i  \right) \\
            = 2B_x(S)-2B_y(S) + 2n\left(\sum_{i\in S} d_i - \sum_{i=1}^m d_i \right) \\
            = 2B_x(S)-2B_y(S) + 2n\left(\sum_{i=1}^m d_{s_i} - \sum_{i=1}^m d_i \right) \\
            \geq 2B_x(S)-2B_y(S).
    \end{gather*}
    since $s_i\geq i$ and thus $d_{s_i}\geq d_i$. Now, if $i\in S$, $j\notin S$, $i>j$, then $j\in \{1,\dots,n-1\}$.  Moreover, $$y_i-y_j =x_i + d_i -x_j -d_j = x_i -x_j + (d_i-d_j)\geq x_i-x_j$$ and so
    $$B_x(S)-B_y(S)=\sum_{i\in S}\sum_{j\in S,i>j} \frac{1}{x_i-x_j} - \frac{1}{y_i-y_j}\geq 0. $$ Thus $E_x(S)-E_x(I_m) - E_y(S) + E_y(I_m)\geq 0$ as claimed.

    Suppose now that $n\in S$. Let $S^c = I_n \setminus S$; then $n\notin S^c$.  By symmetry, $$E_x(S)=E_x(S^c).$$ Moreover, $$T_L(S^c) = \{1,\dots,n-1\}\setminus T_L(S)$$ and so $$E_y(T_L(S^c))=E_y(T_L(S)).$$  In this case, $m'=m-1$. By symmetry again, $$E_x(I_m) = E_x(I_{n-m}),\quad  E_y(I_{m-1})=E_y(I_{(n-1)-(m-1)})=E_y(I_{n-m}). $$ Putting these inequalities together yields the statement of the Lemma.
\end{proof}

\begin{theorem} \label{thrm:HermiteIntervalOptimal}
Let $S\subset\{1,\dots,n\}$, $m=|S|$. Then $$\sum_{i\in S,j\notin S} \frac{1}{|x_{n,i}-x_{n,j}|}
\geq \sum_{i\in I_m,j\notin I_m} \frac{1}{|x_{n,i}-x_{n,j}|},$$ where as before $x_{n,i} = (2n)^{-1/2}\xi_{n,i}$ are the scaled roots of the $n$-th Hermite polynomial $H_n$.
\end{theorem}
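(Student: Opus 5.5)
The plan is to argue by induction on $n$, using Lemma~\ref{lem:ExvsEy} to pass from the roots of $H_n$ to those of $H_{n-1}$. Throughout, write $D_n(S)=E_x(S)-E_x(I_m)$, where $x$ denotes the rescaled roots of $H_n$; the claim is that $D_n(S)\geq 0$.

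There is one subtlety. Lemma~\ref{lem:ExvsEy} compares $D_n(S)$ with the analogous quantity for $y_i=(2n)^{-1/2}\xi_{n-1,i}$, which are rescaled by $(2n)^{-1/2}$ rather than $(2(n-1))^{-1/2}$. This is harmless, because the sign of $E_y(S')-E_y(I_{m'})$ is invariant under a common positive rescaling of all points: both energies simply get multiplied by the same positive constant. So it suffices to prove, for every $n$, the scale-free statement $P(n)$: for every $S\subset\{1,\dots,n\}$ with $|S|=m$,
\[
\sum_{i\in S,\,j\notin S}\frac{1}{|\xi_{n,i}-\xi_{n,j}|}\;\geq\;\sum_{i\in I_m,\,j\notin I_m}\frac{1}{|\xi_{n,i}-\xi_{n,j}|}.
\]

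The base cases are immediate. For $n=1$, and more generally whenever $S=\emptyset$ or $S=I_n$, both sides coincide; in particular both sides vanish when $m\in\{0,n\}$.

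For the inductive step, assume $P(n-1)$ and let $S\subset\{1,\dots,n\}$. Lemma~\ref{lem:ExvsEy} gives
\[
E_x(S)-E_x(I_m)\;\geq\;E_y(T_L(S))-E_y(I_{m'}),
\]
where $T_L(S)\subset\{1,\dots,n-1\}$ has cardinality $m'$. By $P(n-1)$ together with the rescaling remark, the right-hand side is $\geq 0$, which gives $P(n)$. The theorem then follows by multiplying by the positive scaling factor once more.

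The main obstacle is only bookkeeping: Lemma~\ref{lem:ExvsEy} must apply for every $S$, including the case $n\in S$. The lemma handles that case through complementation, so I would simply verify that its hypotheses hold for all $n\geq 2$; this requires Lemma~\ref{lem:hermiteFacts}(\ref{fact:diMonotone}) for that $n$. When $n=2$, the $d_i$ sequence has a single term, so the monotonicity condition is vacuous. I would also check explicitly that $I_{m'}$ with $m'=0$ or $m'=n-1$ gives zero energy, so that degenerate cases of $T_L(S)$ cause no trouble.
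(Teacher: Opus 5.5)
Your proposal is correct and follows the paper's proof: induction on $n$ via Lemma~\ref{lem:ExvsEy}, with the mismatch between the $(2n)^{-1/2}$ and $(2(n-1))^{-1/2}$ scalings absorbed by a positive factor. The paper writes this factor explicitly as $C=(n/(n-1))^{1/2}$, while you phrase it through the scale-free statement $P(n)$; this is only a cosmetic difference.
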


\begin{proof}
    The proof is by induction on $n$. If $n=1$, the statement is clear, since then $S=I_m$. 
    
    Assume now that the statement of the Theorem is true for $n-1$. Then by Lemma~\ref{lem:ExvsEy}, 
    \begin{multline*}
    \sum_{i\in S,j\notin S} \frac{1}{|x_{n,i}-x_{n,j}|} -\sum_{i\in I_m,j\notin I_m} \frac{1}{|x_{n,i}-x_{n,j}|}  =   
    \left(E_x(S)-E_x(I_m)\right) \\ \geq  \left(E_y(T_L(S))-E_y(I_{m'})\right),
    \end{multline*} where $m'=|T_L(S)|$. But $y_1<\dots <y_{n-1}$ are exactly the roots of $H_{n-1}$ rescaled by ${2n}^{-1/2}$, thus with $C = (n/(n-1))^{1/2}$, 
    \begin{multline*} \sum_{i\in S,j\notin S} \frac{1}{|x_{n,i}-x_{n,j}|} -\sum_{i\in I_m,j\notin I_m} \frac{1}{|x_{n,i}-x_{n,j}|} \\ 
    \geq C\left[\sum_{i\in T_L(S),j\notin T_L(S)} \frac{1}{|x_{n-1,i}-x_{n-1,j}|} -\sum_{i\in I_{m'},j\notin I_{m'}} \frac{1}{|x_{n-1,i}-x_{n-1,j}|}\right] \geq 0
    \end{multline*}
    by the induction hypothesis. This completes the proof.
\end{proof}
\bibliographystyle{amsalpha}
\bibliography{refs}

@article{Voiculescu1998,
  author = {Voiculescu, D.},
  title = {The analogues of entropy and of {Fisher}'s information measure in free probability theory {V}},
  journal = {Invent. Math.},
  volume = {132},
  pages = {189--227},
  year = {1998}
}

@book{NicaSpeicher,
  author = {Nica, A. and Speicher, R.},
  title = {Lectures on the Combinatorics of Free Probability},
  publisher = {Cambridge University Press},
  year = {2006}
}

@article{IsmailMuldoon1991,
  author = {Ismail, M. and Muldoon, M.},
  title = {A discrete approach to monotonicity of zeros of orthogonal polynomials},
  journal = {Transactions of the American Mathematical Society},
  volume = {323},
  number = {1},
  pages = {65--78},
  year = {1991}
}

@book{Szego,
  author = {Szegő, G.},
  title = {Orthogonal Polynomials},
  series = {American Mathematical Society Colloquium Publications},
  volume = {23},
  edition = {4},
  publisher = {American Mathematical Society},
  year = {1975}
}

@article{BakryLedoux1996,
  author = {Bakry, D. and Ledoux, M.},
  title = {{Lévy--Gromov}'s isoperimetric inequality for an infinite-dimensional diffusion generator},
  journal = {Inventiones Mathematicae},
  volume = {123},
  pages = {259--281},
  year = {1996}
}

@article{Bobkov1997,
  author = {Bobkov, S. G.},
  title = {An isoperimetric inequality on the discrete cube, and an elementary proof of the isoperimetric inequality in {Gauss} space},
  journal = {Annals of Probability},
  volume = {25},
  number = {1},
  pages = {206--214},
  year = {1997}
}

@book{VoiculescuDykemaNica,
    author={Voiculescu, D. and Dykema K. and Nica, A.},
    title ={Free random variables}, 
    series={CRM Monograph Series No. 1},
    publisher = {American Mathematical Society},
    year = {1992}
}

@article{BartheMaurey,
    author={Barthe, F. and  Maurey, B.},
    title={ Some remarks on isoperimetry of gaussian type},
    journal={Annales de l’I.H.P., section B},
    volume={36}, number={4},
    year={2000}, 
    pages={419--434}
}

@article{CapitaineHsuLedoux,
    author={M. Capitaine and E.P. Hsu and M. Ledoux},
    title={Martingale representation and a simple proof of
logarithmic {Sobolev} inequalities on path spaces}, 
    journal={Elect. Comm. in Probab.},
    volume={2},
    year={1997},
    pages={71--81}
}

@misc{githublink,
  author = {D. Shlyakhtenko},
  title = {Lean formalization of Free Bobkov's Inequality},
  howpublished = "\url{https://github.com/shlyakhtenko/free-bobkov-hermite}",
  year = {2026}, 
}

@article{MarcusSpeilmanSrivastava,
    author={Marcus, A. and Spielman, D. and Srivastava, N.},
    title={Finite free convolutions of polynomials},
    journal={Probab. Theory Relat. Fields},
    volume=182, 
    pages={807--848},
    year=2022}
\end{document}